\newtheorem{lemma}{Лемма}[section]
\newtheorem{thm}{Теорема}[section]
\newtheorem{cor}{Следствие}[section]
\newtheorem{prop}{Предложение}[section]
\newcounter{def}
\newcommand{\dist}{\mathop{\rm dist}\nolimits}
\newcommand{\cg}{\mathop{\rm CG}\nolimits}
\newcommand{\ko}{\mathop{\rm K}\nolimits}
\begin{document}

\title{Экстремальные задачи упаковок кругов на сфере и  неприводимые контактные графы}
\author{О. Р. Мусин\thanks{Работа выполнена при частичной поддержке гранта РФФИ  13-01-12458 и гранта NSF  DMS-1400876},\,  А. С. Тарасов}
\date{}
\maketitle
\begin{abstract}
Недавно, с точностью до изометрии, нами были перечислены все локально-жесткие упаковки  конгруэнтных кругов (сферических шапочек) на единичной сфере  с числом кругов $N <12$. Эта задача эквивалентна перечислению сферических неприводимых контактных графов. В этой статье мы показываем, что с помощью списка неприводимых контактных графов можно решать различные задачи об экстремальных упаковках таких как задача Таммеса для сферы и проективной плоскости, задача о наибольшем числе контактов у сферических упаковок, задачи Данцера и другие задачи о неприводимых контактных графах. 
\end{abstract}

\section{Введение}
Упаковка шаров в пространстве называется {\it жесткой} или, иногда, {\it локально-жесткой}, если они расположены так, что каждый шар из упаковки зажат соседями и его нельзя сдвинуть в сторону с тем, чтобы увеличить минимальное расстояние между центром этого шара и центрами других шаров.

Рассмотрим $N$ не пересекающихся шаров одинакового радиуса $r$ в ${\Bbb R}^3$, которые расположены так, что все они касаются одного (центрального) шара единичного радиуса. Обозначим через $P:=\{A_{1},\ldots,A_{N}\}$ точки касания внешними шарами центрального шара. Соединим точки $A_{i}$ и $A_{j}$ ребром (минимальной дугой большого круга), если соответствующие внешние шары касаются. Полученный граф будем называть {\it контактным} и обозначать $\cg(P)$.   Если же эта упаковка на ${\Bbb S}^2$ является локально-жесткой, то будем называть граф $\cg(P)$ {\it неприводимым}. 
Таким образом, задача изучения жестких упаковок локально сводится к изучению неприводимых графов.



Имеется связь этой геометрической задачи с другими задачами упаковки шаров. Одно из основных приложений вне математики - это материаловедение, где рассматриваются локально-жесткие упаковки твердыми телами и наночастицами (см. например, \cite{AMB,Appl04, Appl10}). Заметим также, что большинство конфигураций физических частиц, задающих минимум потенциальной энергии тоже являются локально-жесткими.  

В математике - В. Хабихт, К. Шютте, Б. Л. ван дер Варден, и Л. Данцер применяли неприводимые контактные графы к проблеме контактных чисел и проблеме Таммеса \cite{HabvdW, SvdW1, vdW, SvdW2, Dan}. Недавно с помощью этого метода мы решили проблему Таммеса для $N=13$ и $N=14$ \cite{MT,MT14}. 

В этой статье мы показываем, что с помощью списка неприводимых контактных графов можно решать различные задачи об экстремальных упаковках.  Здесь мы рассматриваем задачи Таммеса для сферы и проективной плоскости, задачу о наибольшем числе ребер у сферического контактного графа, задачи Данцера и другие.

\section{Неприводимые контактные графы}

\subsection{Основные определения}
Обозначим через ${\Bbb S}^{2}$ единичную сферу в ${\Bbb R}^3$. Для точек $x$ и $y$ на сфере, $\dist(x,y)$ - это расстояние в угловом измерении.  

Пусть $X$ - конечное подмножество единичной сфере ${\Bbb S}^{2}$. Обозначим 
$$\psi(X):=\min\limits_{x,y\in X}{\{\dist(x,y)\}}, \mbox{ где } x\ne y.$$

Пусть $d_N$ обозначает  наибольшее значение $\psi(X)$, которое может достигаться для 
$X\subset{\Bbb S}^{2}$  с $|X|=N$, т. е. 
$$
d_N:=\max\limits_{X\subset{\Bbb S}^2}{\{\psi(X)\}}, \, \mbox{ при } \;  |X|=N.
$$

\noindent{\bf Контактные графы.} Как и выше, $X$ - конечное подмножество ${\Bbb
S}^2$.  {\it Контактным графом} $\cg(X)$ называется граф на ${\Bbb S}^2$ с вершинами в $X$ и ребрами (дугами) $xy, \, x,y\in X$, минимальной длины, т. е. с $\dist(x,y)=\psi(X)$.

\medskip

\noindent{\bf Сдвиг вершины.}  
Будем говорить, что вершину $x\in X$ можно {\it сдвинуть в сторону}, если в любой открытой окрестности $x$ найдется такая точка  $x'\in {\Bbb S}^2$, что $$\dist(x',X\setminus\{x\})>\dist(x,X\setminus\{x\}).$$

\medskip

\noindent{\bf Неприводимые графы.}
Назовем контактный граф $\cg(X)$ {\it неприводимым}, если ни одну из его вершин нельзя сдвинуть в сторону.
Этот термин  используется в работах \cite{SvdW1,SvdW2,FeT,Dan}. 

Давайте обозначим через $J_N$ семейство всех конечных множеств $X$ в ${\Bbb S}^2$ с $|X|=N$ таких что $\cg(X)$ является неприводимым.  

\medskip

\noindent{\bf  Д-неприводимые графы.}
Л. Данцер \cite[Sec. 1]{Dan} определил следующую операцию. Пусть 
$x,y,z$ - вершины $\cg(X)$ с $\dist(x,y)=\dist(x,z)=\psi(X)$. Обозначим через  $x^0$ зеркальный образ точки $x$ 
относительно дуги большого круга $yz$ (см. Рис.~\ref{fig3}). Мы назовем эту операцию 
 {\it Д-отражением}, если $\dist(x^0,X\setminus\{x,y,z\}) > \psi(X)$.

Неприводимый контактный граф $\cg(X)$ будем называть Д-неприводимым, если он не допускает ни одного Д-отражения. 

\medskip

\begin{figure}[h]
\begin{center}
\includegraphics[clip,scale=1]{pics/13-6.mps}
\end{center}
\caption{Д-отражение}
\label{fig3}
\end{figure}

\medskip

\noindent{\bf Максимальные графы.} Предположим, что для $X\subset{\Bbb S}^{2}$  с $|X|=N$ имеет место равенство $\psi(X)=d_N$. Будем тогда называть контактный граф $\cg(X)$ - {\it максимальным.}

\subsection{Свойства неприводимых контактных графов.}

Здесь мы рассмотрим такие подмножества $X\subset {\Bbb S}^2$, что граф $\cg(X)$ является неприводимым, т.е. $X\in J_N$.   
Следующие свойства неприводимых графов были опубликованы в работах \cite{SvdW1}, \cite{Dan}, и \cite{BS,BS14} (см. также  \cite[Глава VI]{FeT}). 

 Пусть $a,b,x,y\in X$ с $\dist(a,b)=\dist(x,y)=\psi(X)$.
Тогда кратчайшие дуги ${ab}$ и ${xy}$ не пересекаются. В противном случае,
длина одной из дуг $ax, ay, bx, by$ будет меньше чем $\psi(X)$. Из этого вытекает 
\begin{prop} Если $X$ - конечное подмножество ${\Bbb S}^2$, то  $\cg(X)$ является планарным графом.
\end{prop}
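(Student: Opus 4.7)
The plan is to verify that any two edges of $\cg(X)$, drawn as shortest great-circle arcs, have disjoint relative interiors. Since $\mathbb{S}^2$ is topologically a sphere (homeomorphic to the one-point compactification of $\mathbb{R}^2$), any graph embedded on $\mathbb{S}^2$ without edge crossings is planar; deleting a point not meeting any of the finitely many arcs exhibits the embedding into $\mathbb{R}^2$. So the whole proof reduces to a non-crossing statement, and the excerpt already sketches the key idea.

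Let $ab$ and $xy$ be two distinct edges, so that $\dist(a,b)=\dist(x,y)=\psi(X)$, and first I would treat the case when they share no endpoint. Assuming, for contradiction, that the open arcs meet at a point $p$, two applications of the spherical triangle inequality give
$$
\dist(a,x)+\dist(b,y) \leq \bigl[\dist(a,p)+\dist(p,x)\bigr]+\bigl[\dist(b,p)+\dist(p,y)\bigr] = \dist(a,b)+\dist(x,y) = 2\psi(X).
$$
Hence at least one of $\dist(a,x), \dist(b,y)$ is $\leq \psi(X)$; by the very definition of $\psi(X)$ this forces equality throughout.

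The main obstacle is then the degenerate case. Equality in the spherical triangle inequality means that $p$ lies on a minimizing arc joining $a$ to $x$, and similarly for $b$ and $y$; so all four vertices $a,b,x,y$ must lie on a single great circle containing $p$ in the interior of both arcs. For $|X|\geq 3$ one has $\psi(X)<\pi$ (three pairwise antipodal points are impossible), so minimizing arcs of length $\psi(X)$ are unique, and a straightforward analysis of the four cyclic orderings of $\{a,b,x,y\}$ on the great circle shows that in every arrangement some pair among $\{a,x\},\{a,y\},\{b,x\},\{b,y\}$ has angular distance strictly less than $\psi(X)$, contradicting the definition of $\psi$. The shared-endpoint case is immediate: two arcs of length $\psi(X)<\pi$ emanating from a common vertex $a$ and ending at distinct vertices differ throughout their relative interiors, because minimizing arcs of length $<\pi$ are uniquely determined by their endpoints. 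The trivial cases $|X|\leq 2$ produce at most one edge and are planar automatically.

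Combining these observations, $\cg(X)$ admits a crossing-free embedding in $\mathbb{S}^2$, hence in $\mathbb{R}^2$, which is the required planarity. I would present the argument exactly in this order: first announce the reduction to non-crossing, then the inequality chain, and finally dispatch the equality case via the great-circle analysis.
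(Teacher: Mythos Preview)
Your proof is correct and follows essentially the same route as the paper: both arguments reduce planarity to showing that two minimal-length arcs cannot cross, and both derive the contradiction from the triangle inequality applied at the crossing point. Your version is simply more careful, explicitly disposing of the equality/degenerate case (all four points on one great circle), the shared-endpoint case, and the trivial case $|X|\le 2$, all of which the paper's one-line sketch leaves implicit.
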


\begin{prop} Если  $X\in J_N$, то все грани $\cg(X)$ являются выпуклыми в ${\Bbb S}^2$. 
\end{prop}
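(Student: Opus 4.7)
My plan is to argue by contradiction. Suppose that some face $F$ of $\cg(X)$ is not convex; then $F$ has a reflex vertex, i.e.\ a vertex $v$ at which the interior angle of $F$ is at least $\pi$. I will show that such a $v$ can be displaced so as to strictly increase its distance to every other point of $X$, which contradicts the assumption $X \in J_N$.

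Let $u_1, \ldots, u_k \in X$ be the neighbors of $v$ in $\cg(X)$, so that $\dist(v, u_i) = \psi(X) =: \psi$ and every other point of $X$ lies strictly farther from $v$. Denote by $\tau_i \in T_v {\Bbb S}^2$ the unit tangent vector from $v$ toward $u_i$ along the minor arc; then $u_i = \cos\psi \cdot v + \sin\psi \cdot \tau_i$ as vectors in ${\Bbb R}^3$. The cyclic sector of $T_v {\Bbb S}^2$ corresponding to $F$ has angular measure equal to the interior angle of $F$ at $v$; since this is $\geq \pi$, the $\tau_i$ all lie in a closed half-plane. Choose a unit vector $n \in T_v {\Bbb S}^2$ pointing into $F$ with $\langle n, \tau_i \rangle \leq 0$ for all $i$, and perturb along the geodesic $v(\epsilon) = (\cos\epsilon)\, v + (\sin\epsilon)\, n$. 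A direct computation gives
\begin{equation*}
\langle v(\epsilon), u_i \rangle = \cos\epsilon\,\cos\psi + \sin\epsilon\,\sin\psi\,\langle n, \tau_i \rangle,
\end{equation*}
whose $\epsilon$-derivative at $0$ equals $\sin\psi\,\langle n, \tau_i \rangle \leq 0$. When the interior angle is strictly greater than $\pi$, the $\tau_i$ in fact lie in an open half-plane, so one can arrange $\langle n, \tau_i \rangle < 0$ for every $i$; then each $\dist(v(\epsilon), u_i)$ strictly increases for small $\epsilon > 0$. By continuity, all non-neighbors of $v$ remain at distance strictly greater than $\psi$ under the same perturbation, and so $v$ can be shifted aside in the sense of the definition, contradicting the irreducibility of $\cg(X)$.

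The step I expect to require the most care is the boundary case in which the interior angle of $F$ at $v$ equals exactly $\pi$: the two edges of $F$ at $v$ are then tangentially antipodal, and the corresponding $\tau_i$ lie on the boundary line of the half-plane, making the first-order term vanish for those neighbors. I would resolve this by a second-order expansion: for those exceptional neighbors, $\langle v(\epsilon), u_i \rangle = \cos\epsilon\,\cos\psi$, which is strictly less than $\cos\psi$ for $\epsilon > 0$ provided $\cos\psi > 0$, i.e., $\psi < \pi/2$, so the distances still increase quadratically. The residual subcase $\psi \geq \pi/2$ arises only for very small $N$ and reduces to configurations in which $v$ has degree at most two with collinear edges — then $F$ has no genuine polygonal reflex corner at $v$, and the claim holds vacuously.
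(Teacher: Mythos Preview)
The paper does not give its own proof of this proposition: it is listed among the known properties of irreducible contact graphs with references to Sch\"utte--van der Waerden, Danzer, and B\"or\"oczky--Szab\'o, but without argument. So there is nothing to compare line by line; your argument is in fact the standard one found in those references.

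Your proof is correct. One small comment on framing: a face fails to be convex precisely when it has a vertex with interior angle strictly greater than $\pi$; an angle equal to $\pi$ still leaves the region convex (the vertex is merely degenerate on an edge). Hence your first-order argument for the strict case already covers everything the proposition asserts, and the whole discussion of the boundary case $=\pi$ is superfluous. That said, your second-order analysis there is not wasted: it shows the stronger fact that for $X\in J_N$ with $\psi(X)<\pi/2$ every face angle is strictly less than $\pi$, which is consistent with (and slightly refines) Proposition~2.4 excluding vertices of degree~$2$. The only hand-wavy line is the final dismissal of $\psi\ge\pi/2$; but since that subcase lies outside what the proposition requires, it does not affect the validity of the proof.
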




\begin{prop} Если $X$ является максимальным, то для  $N>5$ граф $\cg(X)$ является Д-неприводимым и, в частности, неприводимым.
\end{prop}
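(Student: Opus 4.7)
The approach is proof by contradiction. Assume $X$ is maximal, so $\psi(X)=d_N$, and $\cg(X)$ is not D-irreducible. The aim is to exhibit a configuration of $N$ points on ${\Bbb S}^2$ with $\psi$ strictly greater than $d_N$, contradicting the definition of $d_N$.

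Suppose first that $\cg(X)$ is not irreducible, so some $x\in X$ can be moved sideways to a nearby $x'$ with $\dist(x',X\setminus\{x\})>\dist(x,X\setminus\{x\})\geq \psi(X)$. In the configuration $X_1=(X\setminus\{x\})\cup\{x'\}$ only the distances from the moved point change, and they now strictly exceed $\psi(X)$. Thus every edge of $\cg(X)$ incident to $x$ disappears in $\cg(X_1)$, and for a sufficiently small shift no new tight pair at $x'$ arises. If the tight edges of $\cg(X)$ all touched $x$, this already gives $\psi(X_1)>\psi(X)$ and we are done. Otherwise I iterate: at each step the contact graph loses at least one edge (provided the moved vertex has positive degree), so after finitely many moves we are left with a configuration whose last tight edge can be eliminated by a final sideways move at one of its endpoints, yielding $\psi>d_N$ -- the desired contradiction.

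Suppose next that $\cg(X)$ is irreducible but admits an admissible D-reflection $x\mapsto x^0$ with pivots $y,z$. Since reflection preserves $\dist(x^0,y)=\dist(x^0,z)=\psi(X)$ and the admissibility condition ensures $\dist(x^0,w)>\psi(X)$ for every other vertex $w$, in $X_1=(X\setminus\{x\})\cup\{x^0\}$ the vertex $x^0$ has exactly two tight edges. The two outward-distance gradients at $x^0$ span an angle $\theta=\angle yx^0z$ in the tangent plane; this is strictly less than $\pi$, since a non-degenerate D-reflection requires $\dist(y,z)<2\psi(X)$ (equivalently $y,x,z$ non-collinear on the great circle). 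Hence a direction in the common positive half-plane of the two gradients exists, $x^0$ is sideways-moveable, $\cg(X_1)$ is reducible, and the previous case applied to $X_1$ completes the argument.

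The central obstacle is the termination of the iteration in the first case: one must show it cannot get stuck at a configuration whose only sideways-moveable vertices are isolated in the contact graph. This is precisely where the hypothesis $N>5$ enters: a planar-graph count on $\cg(X)$ (using Proposition~1.1) combined with the constraint $d_N\leq d_6=\pi/2$ provides enough angular slack at a positive-degree tight vertex to guarantee its sideways-moveability in any reducible maximal configuration. The small-$N$ cases $N\leq 5$ must be excluded because the optimal configurations there (regular tetrahedron, triangular bipyramid, square pyramid) are so symmetric that the generic argument breaks down, although direct inspection verifies them to be D-irreducible as well.
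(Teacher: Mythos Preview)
The paper does not give its own proof of this proposition: it is listed in Section~2.2 among several structural facts that are simply attributed to the literature (Sch\"utte--van der Waerden \cite{SvdW1}, Danzer \cite{Dan}, B\"or\"oczky--Szab\'o \cite{BS,BS14}, Fejes T\'oth \cite{FeT}). So there is no argument in the paper for you to match.

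That said, your proposal has a genuine gap, and it is exactly the one you flag in your last paragraph without actually closing. In Case~1 the iteration can stall. After you move the moveable vertex $x$ to an isolated position $x'$, the contact graph on the remaining $N-1$ points $Y=X\setminus\{x\}$ may already be irreducible at edge length $d_N$. Then no positive-degree vertex of $X_1$ is sideways-moveable, the edge count stops decreasing, and you never reach a configuration with $\psi>d_N$. What you have produced is merely a \emph{different} maximal configuration $X_1$ (still with $\psi(X_1)=d_N$), and that says nothing about the original $X$; at best your scheme shows that \emph{some} maximal configuration has an irreducible contact graph, not that every one does, which is what the proposition asserts. Your proposed remedy --- ``a planar-graph count on $\cg(X)$ combined with $d_N\le\pi/2$ provides enough angular slack'' --- is not an argument: it gives no reason why an irreducible $(N-1)$-point subgraph at edge length $d_N$ cannot appear, and such subgraphs certainly do appear (for instance the icosahedron with one vertex removed is irreducible with edge length $d_{12}=d_{11}$). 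The explanation you offer for the restriction $N>5$ is also off: the small cases are not excluded because the configurations are ``too symmetric for the generic argument'', but because of genuine degeneracies (for $N=5$ the optimum is not unique, and pairs such as the two poles are antipodal, making the reflecting geodesic in a D-reflection ill-defined).

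Your Case~2 reduction is fine as far as it goes: after a nontrivial D-reflection the reflected vertex has exactly the two pivots as neighbours, the triangle inequality forces $\angle yx^0z<\pi$, and hence $x^0$ is sideways-moveable. But this only feeds back into Case~1, so the overall argument is incomplete until the termination problem there is resolved. To finish, you would need a separate argument --- for example the one in Danzer's paper --- that controls what happens to the \emph{entire} configuration, not just to one vertex at a time.
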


\begin{prop} Если $X\in J_N$,  то степени вершин графа $\cg(X)$ могут быть только  $0$ (изолированные вершины), $3$, $4$, или  $5$.
\end{prop}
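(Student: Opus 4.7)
My plan is to establish Proposition~4 in two steps: an upper bound $\deg(x)\leq 5$ from the spherical geometry at $x$, and the exclusion of $\deg(x)\in\{1,2\}$ via the irreducibility hypothesis, so that the only remaining options are $\{0,3,4,5\}$. For the upper bound, set $\psi:=\psi(X)$ and let $y_1,\ldots,y_k$ be the neighbors of $x$, listed in cyclic order. Every pair $(y_i,y_j)$ satisfies $\dist(y_i,y_j)\geq\psi$ since $\psi$ is the minimum, so the spherical law of cosines in the isosceles triangle $xy_iy_j$ yields
\[
\cos\dist(y_i,y_j)=\cos^2\psi+\sin^2\psi\cos\angle y_ixy_j\leq\cos\psi,
\]
which rearranges to $\cos\angle y_ixy_j\leq\cos\psi/(1+\cos\psi)<1/2$, hence $\angle y_ixy_j>\pi/3$ strictly. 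Since the $k$ consecutive angles around $x$ sum to $2\pi$, this forces $k<6$.

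For the lower bound, I use irreducibility of $\cg(X)$ directly. If $\deg(x)=1$ with unique neighbor $y$, moving $x$ a small distance along the great circle through $x$ and $y$ in the direction away from $y$ strictly increases $\dist(x,y)$, while all other distances remain $>\psi$ by continuity and the finiteness of $X$; this shifts $x$ and contradicts $X\in J_N$. If $\deg(x)=2$ with neighbors $y_1,y_2$ and $\alpha:=\angle y_1xy_2<\pi$, the unit tangent vectors $v_1,v_2$ at $x$ pointing toward $y_1,y_2$ are not antipodal, so one can choose a tangent direction $v$ with $v\cdot v_1<0$ and $v\cdot v_2<0$; a small move of $x$ in direction $v$ increases both $\dist(x,y_i)$ to first order, again producing a shift.

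The main obstacle is the limiting subcase $\deg(x)=2$ with $\alpha=\pi$, where $y_1,x,y_2$ lie on a common great circle, the tangents $v_1,v_2$ are antipodal, and the first-order argument fails. The natural next step is a second-order perpendicular perturbation: an orthogonal displacement of $x$ by $\epsilon$ satisfies $\cos\dist(x',y_i)=\cos\epsilon\cdot\cos\psi$, which falls strictly below $\cos\psi$ precisely when $\cos\psi>0$, yielding the required shift whenever $\psi<\pi/2$. The residual regime $\psi\geq\pi/2$ forces $N$ to be very small and must be disposed of by separate inspection or by shifting one of $y_1,y_2$ rather than $x$, as in \cite{SvdW1, Dan}; this coupling between the local geometry at $x$ and the global invariant $\psi(X)$ is, in my view, the delicate part of the argument.
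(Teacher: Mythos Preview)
The paper does not prove this proposition itself: in Section~2.2 it is stated as one of several known properties of irreducible contact graphs, with references to \cite{SvdW1}, \cite{Dan}, \cite{BS,BS14} and \cite[Глава~VI]{FeT}. The only piece the paper spells out explicitly is the upper bound $\deg(v)\le5$, and that appears later, inside the proof of Lemma~5.1, via exactly the isosceles-triangle angle estimate you give (each angle at $v$ exceeds $60^\circ$, so at most five fit around~$v$). On that half your argument matches the paper verbatim.

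Your exclusion of degrees $1$ and $2$ follows the standard line from the cited sources and is correct in the generic cases. You are also right that the collinear subcase $\alpha=\pi$ is the only delicate point and that the perpendicular second-order move settles it once $\psi<\pi/2$. Your caveat about the residual regime $\psi\ge\pi/2$ is well placed rather than a defect: the statement as literally formulated can fail for very small $N$ (four equally spaced points on a great circle give an irreducible configuration in which every vertex has degree~$2$), so an implicit hypothesis of the type $\psi(X)<\pi/2$, equivalently $N$ not too small, is being tacitly assumed. Since $\psi(X)\le d_N<\pi/2$ for all $N\ge7$, your perpendicular argument already covers essentially the entire range the paper enumerates.
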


\begin{prop} Если $X\in J_N$, то грани $\cg(X)$ являются многоугольниками не более чем с   $\lfloor2\pi/\psi(X)\rfloor$ вершинами.
\end{prop}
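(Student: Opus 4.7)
My plan is to combine Proposition 2 with a classical bound on the perimeter of a spherical convex region.

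First, let $F$ be a face of $\cg(X)$ and let $k$ be the number of vertices of $F$. By Proposition 2, $F$ is convex in ${\Bbb S}^2$. Every edge on the boundary $\partial F$ is an edge of $\cg(X)$, so by the definition of the contact graph each such boundary edge is a minimal geodesic arc of length exactly $\psi(X)$. Therefore the perimeter of $F$ equals
\[
\mathrm{per}(F)=k\,\psi(X).
\]

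Next I would invoke the classical fact that any (closed) convex region $C\subset{\Bbb S}^2$ has perimeter at most $2\pi$. The standard argument is that a convex subset of the sphere with diameter less than $\pi$ is contained in an open hemisphere, and one can then compare $\partial C$ to the boundary of a containing hemisphere (the equator, of length $2\pi$) by central projection, or, equivalently, use the fact that for a closed convex curve on ${\Bbb S}^2$ the total geodesic curvature plus enclosed area equals $2\pi$, so the length is bounded by $2\pi$. For our face $F$ this gives $\mathrm{per}(F)\le 2\pi$.

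Combining the two facts yields $k\,\psi(X)\le 2\pi$, hence $k\le 2\pi/\psi(X)$. Since $k$ is a positive integer, $k\le\lfloor 2\pi/\psi(X)\rfloor$, which is the claim.

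The only nontrivial ingredient is the universal bound $\mathrm{per}(F)\le 2\pi$ for spherical convex regions; I would cite it as a standard result (for instance from Fejes T\'oth \cite{FeT}) rather than reproduce the proof. A minor point to verify is that the face $F$ is genuinely convex in the strong sense needed (diameter strictly less than $\pi$); this follows because $F$ lies in the complement of the other vertices of $X$ and its edges have length $\psi(X)<\pi$, so $F$ cannot contain a pair of antipodal points and Proposition 2 gives the required convexity directly.
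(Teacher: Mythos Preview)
The paper does not prove this proposition; it is stated (together with Propositions~2.1--2.4 and~2.6) as a known property of irreducible contact graphs, with references to \cite{SvdW1,Dan,BS,BS14} and \cite[Chapter~VI]{FeT}. Your perimeter argument is the standard route, and the main line --- $\mathrm{per}(F)=k\,\psi(X)\le 2\pi$, hence $k\le\lfloor 2\pi/\psi(X)\rfloor$ --- is correct.

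Two small corrections. First, the Gauss--Bonnet aside does not yield the perimeter bound: the identity $\int_{\partial F}\kappa_g\,ds+\ar(F)=2\pi$ (with $\kappa_g\ge 0$ by convexity) bounds the \emph{area} of $F$ by $2\pi$, not its boundary length. The valid justification is your other one --- a convex spherical region lies in some closed hemisphere $H$, and by monotonicity of perimeter for nested convex bodies (via Crofton's formula, or nearest-point projection onto $F$) one gets $\mathrm{per}(F)\le\mathrm{per}(H)=2\pi$. Second, the final paragraph is unnecessary and its reasoning is not right: short edge lengths alone do not prevent $F$ from containing an antipodal pair. You do not need this step, since Proposition~2.2 already asserts the convexity of $F$; a spherical polygon with no reflex interior angles is an intersection of closed hemispheres and therefore lies in one of them, which is exactly what the perimeter bound requires.
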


Следующее свойство было найдено Бёрёцким и Сабо	 {\cite[Lemma 8 and Lemma 9(iii)]{BS}}. 

\begin{prop}  Пусть $X\in J_N, \, N>10$. Если $\cg(X)$ содержит изолированные вершины, то эти вершины лежат внутри граней   $\cg(X)$ с шестью или более вершинами. Более того, внутри шестиугольника не может лежать две изолированные вершины.  
\end{prop}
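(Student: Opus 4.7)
Пусть $v\in X$ — изолированная вершина в $\cg(X)$, $F$ — содержащая её грань с вершинами $w_1,\ldots,w_k$ в циклическом порядке. Положим $\delta := \dist(v, X\setminus\{v\})$; изолированность даёт $\delta>\psi(X)$. Неприводимость означает, что $v$ нельзя сдвинуть так, чтобы увеличить $\delta$, и стандартный аргумент показывает: не менее трёх точек из $X$ реализуют расстояние $\delta$ до $v$, причём расположены так, что $v$ лежит во внутренности их сферической выпуклой оболочки (иначе существовало бы направление сдвига, увеличивающее все расстояния до ближайших точек).

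Для доказательства $k\ge 6$ я веерно триангулирую $F$ из $v$: в силу выпуклости $F$ (Предл.~2.2) дуги $vw_i$ разбивают $F$ на сферические треугольники $T_i=w_ivw_{i+1}$ с углами $\gamma_i$ при $v$, сумма которых равна $2\pi$. Предположим $k\le 5$; тогда найдётся $\gamma_i\ge 2\pi/k\ge 2\pi/5=72^\circ$. В $T_i$ сторона $w_iw_{i+1}$ имеет длину $\psi:=\psi(X)$, а две другие стороны $\dist(v,w_i)$ и $\dist(v,w_{i+1})$ строго больше $\psi$ (так как $v$ изолирована). Сферическая теорема косинусов в таких треугольниках даёт $\gamma_i<\alpha_\psi$, где $\alpha_\psi$ — угол при вершине равностороннего сферического треугольника со стороной $\psi$, определяемый равенством $\cos\alpha_\psi=\cos\psi/(1+\cos\psi)$. Условие $N>10$ влечёт $\psi\le d_{11}=d_{12}=\arccos(1/\sqrt{5})$, откуда прямое вычисление даёт $\alpha_\psi\le 72^\circ$, с равенством лишь при $\psi=\arccos(1/\sqrt{5})$. Получаем противоречие $72^\circ\le\gamma_i<\alpha_\psi\le 72^\circ$ во всех случаях, кроме граничного $\psi=d_{12}$, который при $N\in\{11,12\}$ отвечает только икосаэдрическим конфигурациям, не содержащим изолированных вершин.

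Для второго утверждения предположим, что шестиугольная грань $F$ содержит две изолированные вершины $v_1,v_2$. Тот же веерный приём при $k=6$ даёт $\gamma_i^{(j)}<\alpha_\psi$ для $j=1,2$, но равенство $\sum_i\gamma_i^{(j)}=2\pi$ уже совместимо с оценкой $\gamma_i^{(j)}<\alpha_\psi$, поскольку $\alpha_\psi>\pi/3$; одних углов недостаточно. Здесь я рассмотрел бы множество $H\subset F$ тех точек, которые отстоят от всех шести $w_i$ на расстояния $>\psi$; оно содержит и $v_1$, и $v_2$. В плоском случае правильного шестиугольника со стороной $\psi$ это множество вырождается в одну точку (центр). План состоит в том, чтобы оценить диаметр $H$ сверху величиной $\psi$, используя выпуклость $F$ (Предл.~2.2) и ограничения на внутренние углы $F$ в его вершинах, следующие из Предл.~2.4 (степень каждой $w_i$ в $\cg(X)$ не превосходит~5). Такая оценка противоречила бы $\dist(v_1,v_2)>\psi$ (неравенство строгое, ибо $v_1,v_2\in X$ не соединены ребром в $\cg(X)$).

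Главная трудность — реализация последнего шага: требуется точная сферическая версия плоского наблюдения о том, что центр является единственной точкой правильного шестиугольника со стороной $\psi$, равноудалённой на $\psi$ от всех вершин, вместе с количественной оценкой её устойчивости при отклонениях формы шестиугольника в пределах, допустимых структурой неприводимого контактного графа при $\psi\le d_{12}$.
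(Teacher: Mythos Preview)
The paper does not prove this proposition; it attributes the result to B\"or\"oczky and Szab\'o \cite[Lemma~8 and Lemma~9(iii)]{BS} and states it without argument. So there is no in-paper proof to compare your attempt against, and your proposal has to be judged on its own.

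For the first assertion your fan-triangulation idea is the right one, but the crucial step $\gamma_i<\alpha_\psi$ is claimed too quickly. It is \emph{not} an automatic consequence of the spherical law of cosines whenever the two legs exceed $\psi$: in the isosceles triangle with base $\psi$ and legs $a=b$, the apex angle as a function of $a$ decreases on $(\psi,\pi/2)$ and increases on $(\pi/2,\pi)$, returning to $\alpha_\psi$ at $a=\pi-\psi$ and exceeding it beyond. Thus you need an a~priori bound $\dist(v,w_i)<\pi-\psi$, which should come from the fact that $v$ lies inside a convex equilateral $k$-gon with $k\le 5$ and side $\psi\le d_{12}$; this is plausible but you have not established it. Your treatment of the boundary case $\psi=d_{12}$ also leans on the uniqueness of the Tammes optima for $N\in\{11,12\}$, which is outside input rather than part of the argument.

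For the second assertion you explicitly present only a plan: bound the diameter of the set $H\subset F$ of points at distance $>\psi$ from all six vertices and derive a contradiction with $\dist(v_1,v_2)>\psi$. That is a reasonable strategy, but the quantitative estimate that would make it work --- a spherical, robust version of ``the centre is the only such point in a regular hexagon'' under the deformations permitted by Propositions~2.2 and~2.4 --- is exactly the substance of the lemma, and it is missing. As it stands, the proposal is not a proof of the hexagon clause.
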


Комбинируя эти предложения вместе, получаем следующие комбинаторные свойства неприводимых контактных графов: 
\begin{cor}\label{cor1} Если $X\in J_N$, то $G:=\cg(X)$ удовлетворяет следующим свойствам  
\begin{enumerate}
\item $G$ является планарным графом;
\item У любой вершины $G$ степень равна $0,3,4,$ или $5$;
\item Если $N>10$ и у  $G$ есть изолированная вершина $v$, то $v$ лежит в грани с $m\ge 6$ вершинами. Гексагональная грань     $G$ не может содержать две изолированные вершины.
\end{enumerate}
\end{cor}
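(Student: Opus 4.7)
The plan is to prove this corollary by directly assembling three of the preceding propositions, since the statement is a consolidated combinatorial summary of geometric properties already established. No new arguments are needed — the heavy lifting has been done in Propositions 2.1, 2.4 and 2.6, so the task reduces to verifying that the hypothesis $X\in J_N$ (with the additional restriction $N>10$ where relevant) triggers each of these three propositions.

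For item (1), I would cite Proposition 2.1 verbatim: planarity of $\cg(X)$ holds for any finite $X\subset{\Bbb S}^2$ and does not even require irreducibility, so certainly it holds whenever $X\in J_N$. For item (2), the irreducibility of $\cg(X)$ is exactly the content of $X\in J_N$, and Proposition 2.4 gives directly that every vertex of $\cg(X)$ has degree $0$, $3$, $4$, or $5$. For item (3), the additional hypothesis $N>10$ brings Proposition 2.6 (the Böröczky–Szabó lemma) into play; its two conclusions — that an isolated vertex must lie inside a face with at least six vertices, and that no hexagonal face can contain two isolated vertices — are precisely the two assertions of item (3).

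The only care required is bookkeeping: confirming that the hypotheses of each invoked proposition match the corollary's setting (finite $X\subset{\Bbb S}^2$ for Proposition 2.1, $X\in J_N$ for Proposition 2.4, and $X\in J_N$ together with $N>10$ for Proposition 2.6). This verification is immediate. I do not expect any real obstacle, because the substantive content — the non-crossing of minimum-length arcs yielding planarity, the angular argument forcing vertex degrees to be at most five when no vertex can be shifted, and the Böröczky–Szabó analysis of isolated vertices — has already been proved in the cited works and is taken as given here.
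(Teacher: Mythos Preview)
Your proposal is correct and matches the paper's own treatment exactly: the corollary is stated there as an immediate consequence of ``combining these propositions together,'' with no further argument given. You have identified precisely the right three propositions (2.1 for planarity, 2.4 for the degree restriction, 2.6 for the isolated-vertex statement) and correctly checked that the hypotheses line up.
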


\subsection{Работа Л. Данцера по неприводимым графам} 

В работе \cite{Dan} Людвиг Данцер приводит решение проблемы Таммеса для $N=10$ и $N=11$. Эта статья - английский перевод его  докторской диссертации: ``Endliche Punktmengen auf der 2-sph\"are mit m\"oglichst gro{\ss}em Minimalabstand'', Universit\"at G\"othingen, 1963. В этой работе, в частности,  к понятию приводимости было добавлено понятие Д-отражения и Д-приводимости. (В статье применяется другая терминология и мы используем здесь Д в честь Л. Данцера.) 

\medskip


В своей работе \cite{Dan} Данцер также приводит список Д-неприводимых графов для   $6\le N \le 10$. Так как максимальные контактные графы являются Д-неприводимыми, то в этот список включены и графы, дающие решение проблемы Таммеса для этих $n$.  Заметим, что для случая $N=11$ Данцер рассматривал только максимальные контактные графы.

\subsection{Перечисление неприводимых контактных графов} Пусть конечное множество точек $X\subset {\Bbb S}^2$ такое, что контактный граф $\cg(X)$ является неприводимым. В Следствии 2.1 мы собрали вместе комбинаторные свойства $\cg(X)$.  Имеется целый ряд геометрических свойств этих графов. 

Напомним, что все грани $\cg(X)$ являются выпуклыми (Предложение 2.2). Поскольку все ребра $\cg(X)$ одинаковой длины  $\psi(X)$, то его грани - выпуклые равносторонние сферические многоугольники с числом вершин не превосходящим $\lfloor2\pi/\psi(X)\rfloor$.  

Рассмотрим теперь планарный граф $G$, с заданными гранями $\{F_k\}$, который удовлетворяет Следствию 2.1. Мы будем рассматривать вложения этого графа в ${\Bbb S}^2$ как неприводимого контактного графы  $\cg(X)$ для некоторого $X\subset {\Bbb S}^2$.

Вложение графа $G$ в ${\Bbb S}^2$ однозначно задается следующим набором параметров (переменных):\\
(i) Длина ребра  $d$;\\
(ii) Наборы углов $u_{ki}$, $i=1,\ldots,m_k$ граней $F_k$. (Здесь $m_k$ обозначает число вершин у грани $F_k$.)

В нашей работе по перечислению графов были рассмотрены основные геометрические соотношения между этими параметрами (\cite[Предложение 4.1]{MT2013}).

\medskip

Алгоритм перечисления неприводимых контактных графов состоит из двух частей:\\
(I) На первом этапе составляется список $L_{N}$, состоящий из всех графов с $N$ вершинами и удовлетворяющий Следствию 2.1;

Чтобы создать список $L_{N}$ мы используем программу {\it plantri}
(см. \cite{PLA1,PLA2}). Эта программа является генератором не изоморфных планарных графов различных классов, включая триангуляции и другие разбиения на выпуклые многоугольники. 

Заметим. что число графов в $L_{N}$ с ростом $N$ быстро возрастает. Например, при $N=6,7,8$, $|L_N|=7,34, 257$, а уже 
$|L_{13}|=94754965$

\medskip

\noindent(II) Используя линейную аппроксимацию соотношений из \cite[Предложения 4.1]{MT2013}, из $L_{N}$ удаляются все графы, которые не могут быть вложены в сферу. Оставшиеся графы после дополнительной проверки с помощью солверов и оценки границ изменения параметров заносятся в список неприводимых контактных графов. 

\medskip

Основной результат работы \cite{MT2013} приведен в Приложении.

\section{Контактные числа и проблема Таммеса}

\subsection{Контактные числа} 
{\it Контактным числом} $k(n)$ называют наибольшее число не пересекающихся шаров одинакового радиуса в  ${\Bbb R}^n$, которые можно расположить так, чтобы все они касались одного (центрального) шара такого же радиуса. 

Очевидно, что $k(2)=6$. В трехмерном пространстве, в задаче о контактных числах спрашивается: ``Как много белых бильярдных шаров могут одновременно касаться черного бильярдного шара?''
   
Наиболее симметричная конфигурация, 12 бильярдных шаров вокруг одного, это когда центры 12 шаров расположены в вершинах правильного икосаэдра, а центральный шар расположен в центре икосаэдра. Однако, эти 12 внешних шаров не касаются друг друга и могут свободно перемещаться по поверхности центрального шара. Таким образом, возможно, что эти 12 шаров можно сдвинуть в одну сторону, так что найдется место для 13-го шара? 

Этот вопрос был предметом спора между И. Ньютоном и Д. Грегори в 1694 году. Ньютон считал, что $k(3)=12$, в то время как Грегори думал, что ответ может быть равен 13. Эту задачу Ньютона - Грегори часто называют {\it проблемой тринадцати шаров.}

Несложно видеть, что проблема тринадцати шаров сводится к следующей задаче: {\it Доказать, что  на единичной сфере ${\Bbb S}^2$ нельзя расположить 13 точек так, чтобы расстояния между ними были не меньше чем  $60^\circ$ в угловом измерении.} 

Проблема тринадцати шаров оказалось достаточно трудной и была решена только в 1953 году. К. Шютте и Б.Л. Ван дер Варден \cite{SvdW2} доказали, что Ньютон был прав и $k(3)=12$. Доказательство Шютте -- ван дер Вардена основано на неприводимых контактных графах. Ими было показано, что что на единичной сфере ${\Bbb S}^2$ не найдется контактного графа ${\Gamma}$ с ребрами одинаковой длины, которая не меньше чем  $60^\circ$.

В 1956 году Дж. Лич \cite{Lee} напечатал двухстраничный набросок элегантного доказательства. Это доказательство было приведено в первом издании известной книги М. Айгнера и Г. Циглера ``Доказательства из Книги'' \cite{AZ}. Однако эта глава была исключена из книги при втором издании, так как авторам не удалось привести подробное доказательство без громоздких вычислений, основанных на сферической тригонометрии. В последние 12 лет было опубликовано несколько новых решений этой старой проблемы 
\cite{Hs, Ma, Manew, Bor,Ans, Mus13}. 

Заметим, что проблема контактных чисел решена только для размерностей $n=3,4,8$  и $24$ (см. \cite{BDM, Mus1, Mus3, Mus4}). Доказательства в этих работах основаны на методе Дельсарта и его обобщениях.

\subsection{Проблема Таммеса} 
У проблемы 13 шаров имеется естественное обобщение: найти расположение множества $X$, состоящего из $N$ точек на  ${\Bbb S}^{2}$,  такое что минимальное расстояние между точками $X$ - максимально возможное. Эту задачу впервые поставил голландский ботаник Таммес  \cite{Tam} (см. также \cite[Section 1.6: Problem 6]{BMP}).

Задача Таммеса решена только для нескольких значений $N$: для $N=3,4,6,12$ ее решил Л. Фейеш Тот \cite{FeT0}; для 
$N=5,7,8,9$  - Шютте и ван дер Варден \cite{SvdW1}; для $N=10,11$ - Данцер \cite{Dan} (для  $N=11$ см. также \cite{Bor11}) и для $N=24$ - Робинсон \cite{Rob}. Недавно мы решили эту задачу для $N=13$ \cite{MT} и для $N=14$ \cite{MT14}.  

В работе Л. Фейеша Тота \cite{FeT0} (см. также его книгу \cite{FeT}) была найдена верхняя оценка для  $d_N$.   (Напомним, что $d_N$ обозначает  наибольшее значение $\psi(X)$, которое может достигаться для 
$X\subset{\Bbb S}^{2}$  с $|X|=N$.)
$$
d_N\leqslant \arccos{\frac{\ctg^2{w_N}-1}{2}}, \; \mbox{ где } \; w_N:=\frac{\pi N}{6N-12}. 
$$

Эта формула является точной для $N=3,4,6,12$ и решением проблемы Таммеса для этих $N$ соответственно являются правильный треугольник на экваторе, правильный тетраэдр, правильный октаэдр и правильный икосаэдр. 

Р. М. Робинсон \cite{Rob} обобщил оценку Фейеша Тота  и решил проблему Таммеса для $N=24$. 

Оказывается, что $d_5=d_6=90^\circ$ \cite{SvdW1, FeT}. При $N=5$ максимальное расположение на сфере не является единственным с точностью до изометрии. Если взять две точки расположенные в северном и южном полюсах сферы, а оставшиеся три точки расположить на экваторе так, чтобы (угловое) расстояния между ними было бы не меньше $90^\circ$, то получим максимальное расположение. 

Для решения проблемы Таммеса при $7\leqslant N \leqslant 11$ и при $N=13, 14$  применялись неприводимые контактные графы. Заметим, что контактный граф у максимальной конфигурации должен быть неприводимым.  Поэтому, максимальный граф находится среди неприводимых контактных графов. Более того, он является Д-неприводимым \cite{Dan}. 

Можно значительно сократить перебор и список ``допустимых'' неприводимых графов, если добавить условие: $d\geqslant\delta_N$, где $\delta_N$ обозначает длину ребра у гипотетического максимального графа. (Собственно, проблема состоит в доказательстве равенства $d_N=\delta_N$.) Большой набор примеров имеется в книге \cite[Глава VI, §4]{FeT} и в таблице Н. Слоэна:   http://neilsloane.com/packings/dim3/. В этой таблице приведены возможные максимальные конфигурации вплоть до $N=130$. 

Из Таблиц 7.1--7.6 в Приложении получаем следующую таблицу: 

\medskip

\begin{tabular}{ccccccc}
$N$ & $6$ & $7$ & $8$ & $9$ & $10$ & $11$\\
$I_N$ & $2$ & $2$ & $4$ & $10$ & $30$ & $38$\\
\end{tabular}

\medskip

Здесь $I_N$ обозначает число неприводимых контактных графов с $N$ вершинами. 

Поскольку, для $N=7,8,9$ числа $I_N$ не очень большие, то в работе \cite{SvdW1} удалось отбросить неподходящие графы без особой суеты. При $N=10$ число $I_N=30$, что довольно много. В этом случае, также как и при $N=11$, Л. Данцер рассматривал только Д-неприводимые контактные графы. В наших работах по проблеме Таммеса для $N=13$ \cite{MT} и для $N=14$ \cite{MT14} мы проводили компьютерный перебор подходящих, т. е. из списка $L_N$, Д-неприводимых контактных графов. 

Интересно, что $d_{11}=d_{12}=\arccos{(1/\sqrt{5}})$. Для  $N=11$ максимальная конфигурация получается из вершин правильного икосаэдра удалением одной из них. 

Заметим, что во всех решенных случаях проблемы Таммеса, максимальное расположение оказалось единственным с точностью до изометрии. Можно, конечно, предположить, что так будет для всех $N$. Однако, в задаче, аналогичной проблеме Таммеса, для плоского тора (периодические упаковки конгруэнтных кругов) при $N=7$ оказалось, что имеется три разные максимальные упаковки \cite{MN}. Это обстоятельство делает проблему Таммеса для сферы еще более интригующей. 

В заключении этого раздела отметим, что прямой подход по решению проблемы Таммеса, основанный на компьютерном перечислении неприводимых контактных графов, т. е. такой как в наших работах \cite{MT,MT14}, себя практически исчерпал. Для $N=15,16,...$ компьютерный перебор может занимать многие месяцы. Здесь нужны новые подходы и идеи.

\section{Проблема Таммеса для антиподальных конфигураций}

Если рассмотреть сферу ${\mathbb S}^d$ как множество единичных векторов $x$ в ${\mathbb R}^{d+1}$, то точки  $x$ и $y=-x$ называются {\it антиподальными}, а отображение 
 $x \to -x$ называется {\it антиподальным} на ${\mathbb S}^d$. Множество $X\subset {\mathbb S}^d$ называется  {\it антиподальным}, если антиподальное отображение переводит $X$ в себе, $X=-X$, иными словами, если $x\in X$, то и $-x\in X$.

Рассмотрим теперь проблему Таммеса для антиподальных множеств на ${\mathbb S}^2$. Пусть $X$ -- антиподальное множество на сфере.  Тогда $X$ содержит четное число точек и, стало быть, $|X|=2M, \, M=1,2,...$. Определим для таких множеств аналог величины $d_N$. 

$$
a_M:=\max\limits_{X=-X\subset{\Bbb S}^2}{\{\psi(X)\}}, \, \mbox{ при } \;  |X|=2M.
$$

{\it Проблема Таммеса для антиподальных множеств} состоит в том, чтобы найти все конфигурации  $X=\{x_1,-x_1,\ldots,x_M,-x_M\}$ на ${\mathbb S}^2$, чтобы $\psi(X)=a_M$. 

Если отождествить на сфере ${\mathbb S}^2$ антиподальные точки, то получим проективную плоскость ${\Bbb R\Bbb P}^2$. Поэтому, по сути, эта проблема о конфигурациях на проективной плоскости у которых минимальное расстояние между точками является максимально возможным. 

Рассмотрим простейшие свойства $a_M$ и оптимальных конфигураций. 

\begin{lemma}
	\begin{enumerate}
	\item $a_M\leqslant d_{2M}$;
	\item Если $|X|=2M, \, \psi(X)=d_{2M}$ и $X$ -- антиподальное множество, то $a_M = d_{2M}$.  Следовательно, если $X=-X$ и $X$ является решением проблемы Таммеса для $N=2M$, то  $X$ также является решением и задачи Таммеса для антиподальных конфигураций;
	\item Если $X\subset{\Bbb S}^2$ -- антиподальное множество, $|X|=2M$ и $\psi(X)=a_{M}$, то $\cg(X)$ является Д-неприводимым контактным графом. 
	\end{enumerate}
\end{lemma}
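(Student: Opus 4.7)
The plan is to handle the three parts in sequence, with (1) and (2) being essentially formal and (3) requiring the substantive argument.

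For (1), I would argue that every antipodal subset $X\subset{\Bbb S}^2$ with $|X|=2M$ is in particular a subset of ${\Bbb S}^2$ of cardinality $2M$, so the supremum $a_M=\max_{X=-X}\psi(X)$ is bounded above by $d_{2M}=\max_{|X|=2M}\psi(X)$ taken over all configurations of the same size. For (2), the hypothesis supplies an antipodal competitor $X$ with $\psi(X)=d_{2M}$, so $a_M\geqslant d_{2M}$; combined with (1) this gives $a_M=d_{2M}$, and that same $X$ is automatically an antipodal optimizer.

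For (3), the strategy is proof by contradiction, using antipodal symmetry to promote a local move on one side of $X$ to a global move respecting $X=-X$. Assume $\cg(X)$ is not D-irreducible, so either some $x\in X$ can be shifted to $x'$, or some triple $(x,y,z)$ with $\dist(x,y)=\dist(x,z)=\psi(X)$ admits a D-reflection to $x^0$. Because $X=-X$, the antipodes $-x,-y,-z$ all lie in $X$, so one can perform the same operation simultaneously on $-x$ (using the arc $(-y)(-z)$ in the reflection case), obtaining a new antipodal set $X'$ of the same cardinality $2M$. Using the shift condition and the fact that the antipodal map is an isometry of ${\Bbb S}^2$, one checks $\dist(x',w)>\psi(X)$ and $\dist(-x',w)=\dist(x',-w)>\psi(X)$ for all $w\in X\setminus\{x,-x\}$ (since then $-w\in X\setminus\{x,-x\}$), and analogously for the D-reflection via the inequality $\dist(x^0,X\setminus\{x,y,z\})>\psi(X)$; the distance $\dist(x',-x')$ stays close to $\pi$ for a small enough perturbation.

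The main obstacle is that $\cg(X)$ may contain an edge with both endpoints in $X\setminus\{x,-x\}$; such an edge is untouched by the simultaneous antipodal move, keeps $\psi(X')=\psi(X)=a_M$, and blocks the immediate contradiction. I would circumvent this by iteration: each shift or D-reflection preserves antipodal symmetry and the value $\psi=a_M$, but strictly decreases the number of edges of the contact graph, since every edge of $\cg(X)$ incident to $x$ or $-x$ disappears (its length becomes $>\psi$) and no new edges of length $\psi$ are created. The edge count is a non-negative integer, so the procedure terminates at an antipodal optimum whose contact graph admits neither a shift nor a D-reflection — exactly the D-irreducibility required in (3).
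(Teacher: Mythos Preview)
Your arguments for parts (1) and (2) are correct and match the paper.

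For part (3) there are two problems. First, the iteration you describe does not produce a contradiction. You assume $\cg(X)$ is not D-irreducible and then repeatedly move antipodal pairs; even if each step kept $\psi=a_M$ and reduced the edge count, the terminal configuration $X^{(k)}$ would merely be \emph{some} antipodal optimum with D-irreducible contact graph. That does not contradict the assumption that the \emph{original} $X$ failed to be D-irreducible, and the statement you are asked to prove is universal: every antipodal optimum has D-irreducible contact graph. So the ``proof by contradiction'' never closes.

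Second, your monotonicity claim is false for D-reflections. The reflection of $x$ across the great circle through $y,z$ fixes $y$ and $z$, so $\dist(x^0,y)=\dist(x,y)=\psi(X)$ and $\dist(x^0,z)=\psi(X)$: the edges $xy,xz$ survive as $x^0y,x^0z$. When $\deg(x)=2$ the edge count is unchanged, and you can even flip back and forth, so the ``non-negative integer decreasing'' argument does not terminate. (There is a similar issue if the only shiftable vertex happens to be isolated.)

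The paper's route is different: after performing a single simultaneous move on $v$ and $-v$, it does not iterate on the same kind of operation but instead continues to shift the \emph{other} points (again in antipodal pairs) so as to obtain an antipodal $X'$ with $\psi(X')>\psi(X)=a_M$, contradicting the definition of $a_M$. This is the antipodal version of the standard argument behind Proposition~2.3. To repair your proof you should aim for a strict increase of $\psi$ after the first move rather than an edge-count descent at fixed $\psi$.
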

\begin{proof} Доказательства 1 и 2 вытекают непосредственно из определений.  
	
	3. Будем доказывать это утверждение от противного.  Для антиподального множества $X$ сдвиг вершины и Д-отражение мы будем делать одновременно для вершины $v$ и ее антипода $-v$, т. е. мы сохраняем антиподальность. Тогда, если конфигурация позволяет такую операцию, то сдвигая еще точки мы получим  новое антиподальное $X'$ у которого $\psi(X')>\psi(X)$, -- противоречие. 
\end{proof}

\begin{thm} Пусть $X_M\subset{\Bbb S}^2$ является решением задачи Таммеса для антиподальных конфигураций, т. е.  $\psi(X_M)=a_M$. Тогда 
	\begin{enumerate}
	\item $X_2$ -- множество вершин квадрата на экваторе, $a_2=90^\circ$;
	\item   $X_3$ -- множество вершин правильного октаэдра, $a_3=90^\circ$;
	\item   $X_4$ -- множества вершин куба, $a_4=\arccos{(1/3)}$;
	\item    $X_5$ -- множество, состоящее из пяти пар антиподальных вершин правильного икосаэдра, $a_5=\arccos{(1/\sqrt{5})}$.
	\item    $X_6$ -- множество вершин правильного икосаэдра, $a_6=\arccos{(1/\sqrt{5}})$.
\end{enumerate}
\end{thm}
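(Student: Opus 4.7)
The plan is to split the five cases into two groups according to whether the known Tammes optimum for $N=2M$ is itself antipodal.

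\emph{Easy cases $M=2,3,6$.} For $M=2$, any antipodal four-set $X=\{x_1,-x_1,x_2,-x_2\}$ satisfies $\dist(x_1,x_2)+\dist(x_1,-x_2)=\pi$, hence $\psi(X)\le 90^\circ$; the vertices of a square on the equator attain this bound, so $a_2=90^\circ$. For $M=3$ the regular octahedron is antipodal and realizes $d_6=90^\circ$, so Lemma 4.1(2) immediately yields $a_3=90^\circ$ with the octahedron optimal. For $M=6$ the regular icosahedron is antipodal and realizes $d_{12}=\arccos(1/\sqrt 5)$, so Lemma 4.1(2) yields $a_6=\arccos(1/\sqrt 5)$ with the icosahedron optimal.

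\emph{Hard cases $M=4,5$.} Here the Tammes optima for $N=8$ (the square antiprism) and $N=10$ (Danzer's configuration) are not antipodal, so in general $a_M<d_{2M}$ and one cannot simply quote the Tammes theorem. Instead I would invoke Lemma 4.1(3): the contact graph of an optimal antipodal configuration is Д-irreducible, so it belongs to the finite list of Д-irreducible graphs on $2M$ vertices, of cardinalities $I_8=4$ and $I_{10}=30$ (Tables 7.1--7.6 of the Appendix). For each graph $G$ in these lists I would (a) check whether $G$ admits a fixed-point-free involution that is realizable as the antipodal map on ${\Bbb S}^2$, and (b) when it does, maximize the edge length over all such antipodal realizations via the parameter relations of \cite[Предложение 4.1]{MT2013} augmented with the linear constraints imposed by the antipodal symmetry.

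For $M=4$ the cube is a direct candidate with edge length $\arccos(1/3)$; I expect the remaining three graphs in $I_8$ to fail the antipodality test (the square antiprism embedding on ${\Bbb S}^2$ is not invariant under $x\mapsto -x$), giving $a_4=\arccos(1/3)$. For $M=5$ the pentagonal antiprism --- the icosahedron with one antipodal pair of vertices removed --- is antipodal with edge length $\arccos(1/\sqrt 5)$; the substantive claim is that no other graph in $I_{10}$ admits an antipodal realization with strictly larger edge length.

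\emph{Main obstacle.} The cases $M=2,3,6$ are a one-line consequence of Lemma 4.1(2), and the pool of four graphs in $I_8$ should be small enough to dispose of by inspection. The real work is the case-by-case check for $M=5$: processing all thirty Д-irreducible graphs in $I_{10}$, identifying those that possess a free involution compatible with a spherical embedding, and running the solver/interval-arithmetic pipeline of Section 2.4 under the additional antipodal constraints to certify the edge-length bound $d\le\arccos(1/\sqrt 5)$. No new technique is needed, but the enumeration is non-trivial rather than conceptual.
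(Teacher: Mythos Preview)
Your plan matches the paper's proof almost exactly: the paper handles $M=2$ by the same great-circle observation, gets $M=3,6$ from Lemma~4.1(2), and for $M=4,5$ invokes Lemma~4.1(3) and then simply writes ``Tables 7.3 and 7.5 prove items 3 and 4.'' So the overall architecture is the same, and the cases you flag as ``hard'' are precisely the ones the paper offloads to the enumeration.

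Where you overshoot is in the amount of work you budget for $M=4,5$. First, Lemma~4.1(3) gives Д-irreducibility, so you need only the graphs marked $*$ or $**$ in the tables --- three for $N=8$ and nine for $N=10$ --- not all $I_8=4$ and $I_{10}=30$ irreducible graphs. Second, the tables already carry the interval $[d_{\min},d_{\max}]$ for each graph, so there is no need to re-run the solver pipeline with antipodal constraints: once you know the target values $\arccos(1/3)\approx 1.23096$ and $\arccos(1/\sqrt5)\approx 1.10715$ are achieved antipodally (by the cube and the pentagonal antiprism), you only have to inspect the few Д-irreducible graphs whose $d_{\max}$ exceeds these thresholds and observe that none of them admits a fixed-point-free combinatorial involution realizable as $x\mapsto -x$. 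That is a symmetry check on a handful of small planar graphs, not an optimization problem. The paper leaves this check entirely implicit, but that is the content hiding behind its one-line citation of the tables; your plan is a correct (if heavier) expansion of it.
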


\begin{proof} 1. В случае $M=2$ у нас имеется две пары антиподальных точек и поэтому эти четыре точки лежат на большой окружности. Следовательно, вершины квадрата являются оптимальным расположением этих точек.
	
Пункты 2 и 5 теоремы вытекают из Леммы 4.1 (2). 

Лемма 4.1 (3) позволяет выбрать $X_M$ из списка неприводимых контактных графов. Таблицы 7.3 и 7.5 доказывают пункты 3 и 4 теоремы. 
\end{proof}


\section{Задача о наибольшем числе контактов}

Пусть $X$ конечное  множество в произвольном метрическом пространстве ${\bf M}$ с расстоянием $\dist$. 
Как и раньше, 
$$\psi(X):=\min\limits_{x,y\in X}{\{\dist(x,y)\}}, \mbox{ где } x\ne y.$$
Обозначим через $e(X)$ число пар $(x,y)$ в $X$ у которых $\dist(x,y)=\psi(X)$. (Иными словами, $e(X)$ это число   ребер у графа $\cg(X)$)  Определим максимальное контактное число (число ребер) для заданного числа точек. 
$$
\ko({\bf M},N):=\max\limits_{X\in {\bf M}, |X|=N}{e(X)}. 
$$
В {\it задаче о наибольшем количестве контактов} спрашивается как найти числа $\ko({\bf M},N)$ или хотя бы оценить их?

В случае когда ${\bf M}={\Bbb S}^2$ будем обозначать величину $\ko({\bf M},N)$ через $\ko_N$. 

Это определение можно обобщить. Рассмотрим $N$ не пересекающихся кругов (сферических шапочек) диаметра $d$ на сфере ${\Bbb S}^2$. Обозначим через $\ko_N(d)$ максимально возможное число касаний этих кругов. 

Заметим, что 
$$
\ko_N=\max\limits_{d\leqslant d_N}{\ko_N(d)}.
$$

В работе  \cite{P12} рассматривалась эта задача для $N=12$ и $d=60^\circ$. Было доказано, что $\ko_N(d)=24.$

Начнем с простейших свойств множеств с максимальным числом контактов. 

\begin{lemma} Пусть $\Gamma_N$ обозначает контактный граф $\cg(X)$ множества $X$ на сфере, с $|X|=N$, у которого число ребер равно $\ko_N$. Тогда $\Gamma_N$ является планарным графом и степень любой вершины этого графа равна $2, 3, 4$ или $5$. 
\end{lemma}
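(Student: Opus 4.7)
Доказательство разбивается на три части: (а) планарность, (б) $\deg(v) \leq 5$ для каждой вершины, (в) $\deg(v) \geq 2$.

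Планарность устанавливается дословным повторением аргумента из Предложения 2.1. Все рёбра $\Gamma_N$ имеют одну и ту же длину $r := \psi(X)$. Если бы две такие дуги $ab$ и $xy$ пересекались, то одно из расстояний $\dist(a,x)$, $\dist(a,y)$, $\dist(b,x)$, $\dist(b,y)$ оказалось бы строго меньше $r$, что противоречит определению $\psi(X)$.

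Ограничение $\deg(v) \leq 5$ доказывается сферически-геометрическим подсчётом. Пусть $v$ имеет $k$ соседей $u_1, \ldots, u_k$, лежащих на окружности $C_v$ радиуса $r$ вокруг $v$. Углы $\theta_i := \angle u_i v u_{i+1}$ при вершине $v$ положительны и в сумме дают $2\pi$, поэтому для некоторого $i$ справедливо $\theta_i \leq 2\pi/k$. По сферической теореме косинусов,
$$\cos\dist(u_i, u_{i+1}) = \cos^2 r + \sin^2 r \cdot \cos\theta_i,$$
и требование $\dist(u_i, u_{i+1}) \geq r$ эквивалентно $\cos\theta_i \leq \cos r / (1 + \cos r)$; при $r > 0$ правая часть строго меньше $1/2$, откуда $\theta_i > \pi/3$, что при $k \geq 6$ несовместимо с $\theta_i \leq 2\pi/k \leq \pi/3$.

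Для $\deg(v) \geq 2$ буду рассуждать от противного: если $\deg(v) \in \{0, 1\}$, то построю $X' \subset {\Bbb S}^2$ с $|X'| = N$ и $e(X') > e(X) = \ko_N$, что даст противоречие. При $\deg(v) = 0$ рассматриваю замкнутое множество $U := \{x \in {\Bbb S}^2 : \dist(x, X \setminus \{v\}) \geq r\}$; его дополнение -- непустое собственное открытое подмножество сферы, поэтому $\partial U \neq \emptyset$, и перенос $v$ в точку $v' \in \partial U$ добавляет не менее одного ребра. При $\deg(v) = 1$ с единственным соседом $w$ скольжу $v$ по малой окружности $C_w$ радиуса $r$ вокруг $w$: в ``неизолированном'' случае, когда $S := \{x \in C_w : \dist(x, X \setminus \{v, w\}) \geq r\}$ отлично от $C_w$, по теореме о промежуточном значении на границе $\partial_{C_w} S$ найдётся точка $v'$ с двумя контактами ($w$ и некоторая точка из $X \setminus \{v, w\}$), что даёт искомый прирост рёбер. Основная техническая трудность -- вырожденный подслучай $S = C_w$, в котором все точки $X \setminus \{v, w\}$ удалены от $w$ не менее чем на $2r$, а пара $\{v, w\}$ ``изолирована'' от остального кластера; здесь нужно отдельно показать, что такая конфигурация даёт $e(X) = 1 + e(X \setminus \{v, w\})$, что при $N \geq 3$ строго меньше реализуемой нижней оценки $\ko_N$ (достигаемой, например, тесной упаковкой из правильного треугольника со стороной $r$ и разнесённых далеко оставшихся $N - 3$ точек), и потому противоречит максимальности $e(X) = \ko_N$.
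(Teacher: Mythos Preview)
Твой план совпадает со схемой доказательства в статье: планарность через Предложение~2.1, ограничение $\deg(v)\le 5$ через подсчёт углов при вершине, а $\deg(v)\ge 2$ --- из максимальности $e(X)$ (низкостепенную вершину можно ``придвинуть'' и увеличить число контактов). Части (а) и (б) у тебя верны; вычисление через сферическую теорему косинусов --- это явная версия того самого углового аргумента, который статья проговаривает в одно предложение.

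Пробел --- в части (в), в вырожденном подслучае $S=C_w$. Ты пишешь, что $e(X)=1+e(X\setminus\{v,w\})$ и что это строго меньше $\ko_N$, поскольку $\ko_N\ge 3$ (треугольник плюс разбросанные точки). Но величина $e(X\setminus\{v,w\})$ --- число пар в $X\setminus\{v,w\}$ на расстоянии ровно $r$ --- вовсе не обязана быть маленькой: оставшиеся $N-2$ точек могут сами образовывать плотный контактный граф с десятками рёбер, и тогда $1+e(X\setminus\{v,w\})\gg 3$. Твоя нижняя оценка $\ko_N\ge 3$ здесь ничего не даёт.

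Чинится это так. Не ограничивайся окружностью $C_w$, а сразу рассмотри для $Y:=X\setminus\{v\}$ область $A=\{p\in{\Bbb S}^2:\dist(p,Y)\ge r\}$. Если у $\partial A$ есть ``угол'' --- точка на расстоянии $r$ сразу от двух точек $Y$ --- перенеси $v$ туда: получишь $\deg(v')\ge 2$ при сохранении всех рёбер внутри $Y$, то есть $e(X')\ge e(X)-\deg(v)+2\ge e(X)+1$, противоречие. Если же углов нет, то все открытые диски $B(y,r)$, $y\in Y$, попарно не пересекаются, а значит $\psi(Y)\ge 2r>r$; тогда \emph{все} рёбра графа $\cg(X)$ инцидентны $v$, откуда $e(X)=\deg(v)\le 1<3\le \ko_N$ при $N\ge 3$ --- снова противоречие. Это и есть недостающий кусок; после него разделять случаи $\deg(v)=0$ и $\deg(v)=1$ уже не нужно.
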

\begin{proof} Планарность следует из Предложения 2.1. 

У графа $\Gamma_N$ не может быть ни изолированных ($\deg(v)=0$), ни висячих вершин 	($\deg(v)=1$), так как в противном случае мы можем придвинуть эту вершину к другим и увеличить число контактов до двух. 
 
Неравенство $\deg(v)<6$ следует из известного факта, что у равнобедренного сферического треугольника 
 $ABC$ с $|AB|=|AC|=d$ и $|BC|\ge d$ угол $\angle BAC > \pi/3=60^\circ$.  Предположим, что у  вершины $v$, $\deg(v)\geqslant 6$. Тогда   $60^\circ\deg(v)\geqslant 360^\circ$. С другой стороны, сумма углов при вершине $v$ равна $360^\circ$, а поскольку каждый угол больше чем $60^\circ$, то $360^\circ>60^\circ\deg(v)$,   - противоречие. Отсюда вытекает требуемое неравенство.
\end{proof}

Очевидно, что $\ko_2=1$.  Для больших $N$ имеет место следующее неравенство.  
\begin{thm} Пусть $N>2$. Тогда $$\ko_N\leqslant 3N-6$$ и равенство достигается только при $N=3,4,6,12$.
\end{thm}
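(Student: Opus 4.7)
The plan is to derive the bound $\ko_N \le 3N - 6$ from the classical Euler inequality applied to the planar graph $\Gamma_N$, and then to analyse the equality case by exploiting the rigidity of equilateral spherical triangles together with the degree bound from Lemma 5.1.

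First I would prove the upper bound. By Lemma 5.1, $\Gamma_N$ is a simple planar graph. Euler's formula $V - E + F = 2$, combined with the face--edge incidence inequality $2E \ge 3F$ (every face is bounded by at least three edges), gives $E \le 3N - 6$; moreover, equality holds if and only if every face of the spherical embedding of $\Gamma_N$ is bounded by a $3$-cycle, i.e.\ $\Gamma_N$ is a triangulation of ${\Bbb S}^2$.

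Suppose now $e(\Gamma_N) = 3N - 6$, so that $\Gamma_N$ is a spherical triangulation. All edges have the common length $d = \psi(X)$, so each face is bounded by three geodesic arcs of length $d$, i.e.\ is an equilateral spherical triangle with side $d$. For $N = 3$ the graph is $K_3$ and the bound is realised by any three equidistant points on ${\Bbb S}^2$. For $N \ge 4$ one has $d \le d_N < d_3 = 2\pi/3$, so the small equilateral spherical triangle of side $d$ has each vertex angle $\alpha = \alpha(d) \in (\pi/3,\pi)$, given by $\cos\alpha = \cos d/(1 + \cos d)$. In a simple triangulation with $N \ge 4$ vertices no vertex can have degree $2$ (this would collapse the graph to $K_3$), so by Lemma 5.1 every degree lies in $\{3,4,5\}$. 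The heart of the argument is then that at any vertex $v$ of degree $k$ the $k$ incident face-angles all equal $\alpha$, whence $k\alpha = 2\pi$. Since $\alpha$ depends only on $d$, this forces $k$ to be constant on $V(\Gamma_N)$, so $\Gamma_N$ is $k$-regular; combining $2E = kN$ with $E = 3N - 6$ yields $N = 12/(6-k)$, so $k \in \{3,4,5\}$ gives $N \in \{4,6,12\}$, realised by the regular tetrahedron, octahedron and icosahedron inscribed in ${\Bbb S}^2$.

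The step I expect to require the most care is the claim that every face incident to $v$ contributes angle $\alpha$, and not the complementary ``large'' triangle angle $2\pi - \alpha$ (the two equilateral spherical triangles with the same $3$-cycle as boundary). If $a$ small and $b$ large faces meet at $v$ (with $a+b=k$), the constraint $a\alpha + b(2\pi-\alpha) = 2\pi$ rewrites as $(a-b)\alpha = 2\pi(1-b)$; a short case analysis using $0 < \alpha < \pi$ and $k \ge 3$ forces $b = 0$. Once this is in place the rest is algebra, and the explicit regular polyhedra verify that the bound is attained precisely at $N \in \{3,4,6,12\}$.
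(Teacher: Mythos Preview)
Your proof is correct and follows the same route as the paper's: Euler's formula with $2E\ge 3F$ gives the bound, and in the equality case a triangulation by equilateral spherical triangles forces constant vertex degree and hence $N\in\{3,4,6,12\}$. Your version is in fact more thorough than the paper's --- you justify the constancy of the degree via the angle identity $k\alpha=2\pi$ and you explicitly rule out ``large'' triangular faces --- whereas the paper simply asserts that the faces are congruent and the degrees equal, and then invokes the classification of regular polyhedra with triangular faces.
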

\begin{proof} Рассмотрим контактный граф $\Gamma_N$ с максимальным числом контактов. Пусть у $\Gamma_N$  на сфере будет $F$ граней. Тогда по формуле Эйлера получаем 
$$
N - \ko_N +F=2.
$$  
Поскольку у каждой грани не менее трех сторон, то 
$F\leqslant2\ko_N/3$. Из этого неравенства и формулы Эйлера вытекает требуемое. 

Если $N=3,4,6,12$, то равенство соответственно достигается для правильного треугольника на экваторе, правильного тетраэдра, правильного октаэдра и правильного икосаэдра. 

Заметим, что равенство получается только в том случае, если все грани вложения $\Gamma_N$ на сферу являются конгруэнтными правильными сферическими треугольниками. Степени всех вершин должны быть равны. Но тогда при $N>3$ вершины задают правильные многогранники с треугольными гранями. Известно, что всего у трех из пяти правильных многогранников грани треугольные и они перечислены выше.   
\end{proof}

Поскольку из теоремы следует, что $\ko_5 < 9$, то получаем следующее утверждение: 
\begin{cor} $\ko_5=8$. Такое число получается когда одна точка находится в ``северном полюсе'', а оставшиеся четыре лежат на экваторе и образуют квадрат.
\end{cor}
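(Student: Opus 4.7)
Утверждение требует двух шагов: верхней оценки $\ko_5\leqslant 8$ и явного построения конфигурации с восемью контактами, реализующей эту границу.

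Для верхней оценки я бы сослался непосредственно на Теорему 5.1. Неравенство $\ko_N\leqslant 3N-6$ при $N=5$ даёт $\ko_5\leqslant 9$; с другой стороны, в той же теореме утверждается, что равенство в этой оценке достигается лишь при $N\in\{3,4,6,12\}$. Поскольку $5$ не принадлежит этому списку, получаем строгое неравенство $\ko_5<9$, то есть $\ko_5\leqslant 8$. Никаких дополнительных комбинаторных аргументов здесь не требуется.

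Для нижней оценки я бы предъявил явную симметричную конфигурацию $X=\{v_0,v_1,v_2,v_3,v_4\}\subset{\Bbb S}^2$: точку $v_0=(0,0,1)$ в северном полюсе и четыре точки $v_1=(1,0,0)$, $v_2=(0,1,0)$, $v_3=(-1,0,0)$, $v_4=(0,-1,0)$ -- вершины правильного квадрата на экваторе. Прямая проверка угловых расстояний даёт $\dist(v_0,v_i)=90^\circ$ для $i=1,\ldots,4$ (четыре пары), $\dist(v_i,v_{i+1})=90^\circ$ для соседних вершин квадрата (ещё четыре пары), а пары противоположных вершин квадрата находятся на расстоянии $180^\circ$. Значит, $\psi(X)=90^\circ$ и $e(X)=8$, откуда $\ko_5\geqslant 8$.

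Серьёзных препятствий здесь не возникает: основная работа уже проделана в Теореме 5.1, а построение примера сводится к элементарной проверке расстояний в хорошо известной бипирамидальной конфигурации. Единственный момент, заслуживающий внимания, -- это то, что запрет $N=5$ в Теореме 5.1 даёт нужную верхнюю оценку <<автоматически>>, без нового перечисления графов.
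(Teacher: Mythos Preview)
Your proposal is correct and matches the paper's approach exactly: the paper derives $\ko_5<9$ directly from Theorem~5.1 (since $5\notin\{3,4,6,12\}$) and supplies the polar-plus-equatorial-square configuration as the witness for $\ko_5\geqslant 8$. You have simply spelled out the distance check more explicitly than the paper does.
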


Таким образом, мы знаем решение задачи о максимальном количестве контактов на сфере для $N\leqslant 6$ и $N=12$. Разберем теперь случаи  $7\leqslant N \leqslant 11$.

Множество с максимальным количеством контактов не обязано быть из $J_N$, т. е. его контактный граф не обязан быть неприводимым. Обозначим через $I_N$ множество состоящее из вершин правильного икосаэдра на сфере из которых удалено $12-N$ соседних вершин. Это означает, что  $I_{11}$ это множество вершин икосаэдра без одной вершины, $I_{10}$ - множество $I_{12}$ из которого удалены две вершины соединенные ребром, а   $I_{9}$  это вершины икосаэдра без трех вершин, лежащих на одной грани. Несложно видеть, что $e(I_N)=3N-9$ при  $N<11$. 

Поскольку $e(I_{10})=21$, то $\ko_{10}\geqslant 21$. С другой стороны, в таблице 7.5 - неприводимых графов для $N=10$, графами с наибольшим числом ребер являются 7.5.28 и 7.5.29 с  $e=20.$ (Заметим, что эти графы, полученные удалением двух не соседних вершин икосаэдра, не являются максимальными для задачи Таммеса.)  Следовательно, при $N=10$ максимальное число ребер достигается на графе, который является приводимым. 

Определим еще одну величину $\ko_N^*$ - максимальное число ребер у контактного неприводимого графа с $N$ вершинами, 
$$
\ko^*_N:=\max\limits_{X\in J_N}{e(X)}. 
$$ 

\begin{thm}
	\begin{enumerate}
	\item $\ko^*_7=\ko_7=12;$
	\item  $\ko^*_8=\ko_8=16;$
	\item  $\ko^*_9=\ko_9=18;$
	\item   $\ko^*_{10}=20, \, \ko_{10}=21;$
	\item   $ \ko^*_{11}=\ko_{11}=25.$ 
\end{enumerate}
\end{thm}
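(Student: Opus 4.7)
План состоит в том, чтобы разделить утверждение на две части: значения $\ko^*_N$ получаются из перечисления неприводимых контактных графов, а значения $\ko_N$ требуют как явной конструкции, так и соответствующей верхней оценки. Для первой части я бы просто посмотрел в Таблицы 7.2--7.6 Приложения (полный список неприводимых контактных графов с $7 \leqslant N \leqslant 11$) и считал максимальное число рёбер в каждом случае: получается $\ko^*_7 = 12$, $\ko^*_8 = 16$, $\ko^*_9 = 18$, $\ko^*_{10} = 20$, $\ko^*_{11} = 25$ (для $N = 10$ абзац перед теоремой уже указывает графы 7.5.28 и 7.5.29 с 20 рёбрами).

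Для нижних оценок $\ko_N$ я бы использовал: при $N \in \{7,8,9,11\}$ -- тот же экстремальный неприводимый граф из таблицы; при $N = 10$ -- конфигурацию $I_{10}$ (икосаэдр без двух соседних вершин), у которой $e(I_{10}) = 3\cdot 10 - 9 = 21$ при минимальном угловом расстоянии $\arccos(1/\sqrt 5)$.

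Для верхних оценок на $\ko_N$ рассмотрим $X \subset {\Bbb S}^2$ с $|X| = N$ и $e(X) = \ko_N$. По Лемме 5.1 граф $\cg(X)$ планарен, а степени его вершин лежат в $\{2,3,4,5\}$. Если $\cg(X)$ неприводим, то $e(X) \leqslant \ko^*_N$, что сразу закрывает случаи $N \in \{7,8,9,11\}$. Иначе некоторую вершину $v$ можно сдвинуть в сторону, и идея состоит в том, чтобы, объединив это с формулой Эйлера $N - e + F = 2$, оценкой на размер граней (Предложение 2.5) и оценкой степеней (Лемма 5.1), перечислить возможные комбинаторные типы $\cg(X)$ в случае, когда $\cg(X)$ приводим. Для $N \in \{7,8,9,11\}$ такое перечисление не даёт ни одного приводимого типа с $e > \ko^*_N$ (и тогда $\ko_N = \ko^*_N$); для $N = 10$ оно даёт ровно тип $I_{10}$ с $e = 21$ и ничего при $e \geqslant 22$ (и тогда $\ko_{10} = 21$).

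Главное техническое препятствие -- это корректное перечисление приводимых максимальных по рёбрам типов. Так как от графа требуется только существование одной сдвигаемой вершины (а не неприводимость), допускаются вершины степени $2$ и некоторые глобально нежёсткие конфигурации; нужно с помощью допустимой области $\Omega_v := \{y \in {\Bbb S}^2 : \dist(y, X\setminus\{v\}) \geqslant \psi(X)\}$ и максимальности $e(X)$ обосновать, что $v$ занимает положение, реализующее максимально возможную степень в $\Omega_v$, и затем с помощью геометрических ограничений на грани и на длину ребра (решая, какие планарные графы вообще вкладываются в сферу с равными длинами рёбер) закрыть перечисление. Случай $N = 10$ наиболее деликатен, поскольку $I_{10}$ -- это действительно новый максимизатор за пределами списка неприводимых графов.
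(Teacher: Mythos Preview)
Твой общий план --- читать $\ko^*_N$ из таблиц, строить нижние оценки явными примерами и отдельно разбираться с приводимыми графами --- совпадает со статьёй. Но верхняя оценка у тебя остаётся именно планом: фраза ``перечислить возможные комбинаторные типы $\cg(X)$ в случае, когда $\cg(X)$ приводим'' не превращена в доказательство, а инструменты, на которые ты ссылаешься (в частности, Предложение~2.5 об ограничении числа вершин у грани), формулированы только для $X\in J_N$, то есть для неприводимых графов, и напрямую к приводимым не применимы. Рассуждение про область $\Omega_v$ и ``максимально возможную степень'' тоже только эвристика; из максимальности $e(X)$ не следует, что каждая сдвигаемая вершина реализует максимальную степень в своей допустимой области.

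В статье эта дыра закрывается одной чистой леммой (Лемма~5.3, опирающаяся на Лемму~5.2): \emph{если $e(X)\geqslant 3N-8$, то $\cg(X)$ неприводим}. Доказательство элементарно: из формулы Эйлера при $e\geqslant 3N-8$ все грани --- треугольники и не более двух четырёхугольников, либо ровно один пятиугольник; равносторонние треугольники и ромбы всегда выпуклы (Лемма~5.2), а единственный невыпуклый равносторонний пятиугольник приводит к противоречию через анализ степени соседней вершины. После этого всё сводится к таблицам: при $N=8,11$ в списке есть неприводимые графы с $3N-8$ рёбрами, значит максимальный граф имеет $e\geqslant 3N-8$, по лемме он неприводим, и $\ko_N=\ko^*_N$. При $N=7,9,10$ примеры $I_N$ дают $3N-9$ рёбер; если бы было $e\geqslant 3N-8$, граф был бы неприводим, но в таблицах таких нет --- противоречие, и $\ko_N=3N-9$. Твоё ``перечисление приводимых типов'', если его честно провести, по существу и есть доказательство Леммы~5.3; тебе не хватает именно этой формулировки как отдельного утверждения.
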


Доказательство теоремы следует из двух лемм о контактных графах. 

\begin{lemma} Пусть  $X$ - конечное множество на сфере ${\Bbb S}^2$. Если любая грань  контактного графа $\cg(X)$ 
является  треугольником или четырехугольником, то этот граф неприводим. 	 
\end{lemma}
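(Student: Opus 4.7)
The plan is to reduce irreducibility to a purely local condition at each vertex. For a vertex $v\in X$ with cyclically-ordered neighbors $w_1,\dots,w_k$ in $\cg(X)$, let $u_j\in T_v{\Bbb S}^2$ be the unit tangent at $v$ pointing toward $w_j$. A first-order calculation shows that moving $v$ by $\xi\in T_v{\Bbb S}^2$ changes $\dist(v,w_j)$ to first order by $-\langle\xi,u_j\rangle$, so $v$ can be shifted aside if and only if there is some $\xi$ with $\langle\xi,u_j\rangle<0$ for every $j$. Equivalently, the $u_j$ all lie in an open half-plane of $T_v{\Bbb S}^2$, which happens precisely when the largest angular gap between two consecutive $u_i,u_{i+1}$ exceeds $\pi$. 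That gap is exactly the interior angle at $v$ of the face of $\cg(X)$ bordered locally by the edges $vw_i$ and $vw_{i+1}$. Hence the lemma follows once one shows that no triangular or quadrilateral face has an interior angle exceeding $\pi$ at any of its vertices.

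For a triangular face, all three sides have length $\psi=\psi(X)$, so the face is an equilateral spherical triangle, whose interior angle at each vertex is $\arccos\bigl(\cos\psi/(1+\cos\psi)\bigr)$. This is well-defined and strictly less than $\pi$ whenever $\psi<2\pi/3$, a bound that holds for every $X$ with $|X|\geq 4$ (smaller configurations being trivial to handle directly).

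The quadrilateral case is where the real work lies. Let $F=vw_ixw_{i+1}$ be a quadrilateral face; its four sides have length $\psi$, while the diagonals $|vx|$ and $d:=|w_iw_{i+1}|$ are strictly greater than $\psi$, since otherwise they would be edges of $\cg(X)$ crossing the interior of $F$. The key geometric observation is that $x$ must be the mirror image of $v$ across the geodesic $w_iw_{i+1}$: any point at distance $\psi$ from both $w_i$ and $w_{i+1}$ lies on the perpendicular bisector of that arc, and on this great circle there are at most two such points; one is $v$, the other its reflection. Consequently $|vx|=2\delta$, where $\delta$ is the angular distance from $v$ to the geodesic through $w_i,w_{i+1}$. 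The spherical law of cosines in the isosceles triangle $vw_iw_{i+1}$ gives $\cos d=\cos^2\psi+\sin^2\psi\cos\alpha$ for its angle $\alpha$ at $v$, and the right-triangle identity at the midpoint of $w_iw_{i+1}$ gives $\cos\delta=\cos\psi/\cos(d/2)$. As $\alpha\to\pi$ these formulas force $d\to 2\psi$ and then $\delta\to 0$, so the requirement $|vx|=2\delta>\psi$ bounds $\alpha$ strictly away from $\pi$ (the planar limit pins it down to $\alpha\leq 2\pi/3$).

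The last point that requires care, and the main obstacle in the argument, is to identify the interior angle of $F$ at $v$ with $\alpha$ rather than with its reflex complement $2\pi-\alpha$. The cycle $vw_ixw_{i+1}$ bounds two regions of ${\Bbb S}^2$: the convex rhombus and its reflex complement. Since the interior of any face of $\cg(X)$ is free of vertices of $X$, the extra $|X|-4$ points (present in all applications of this lemma, where $|X|\geq 5$) must lie in one of the two regions and preclude the reflex complement from being a single face; hence $F$ is the convex rhombus, whose angle at $v$ equals $\alpha<\pi$. Combining these three steps with the first-order criterion from the first paragraph, no vertex of $\cg(X)$ admits a lateral shift, so $\cg(X)$ is irreducible.
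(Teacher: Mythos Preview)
Your argument is correct and rests on the same principle as the paper --- a vertex is shiftable precisely when some incident face has interior angle at least $\pi$ there, and equilateral triangles and rhombi never do --- but you supply considerably more detail. The paper's proof is two sentences: it invokes the characterization that $\cg(X)$ is reducible exactly when some face is non-convex (implicit in Proposition~2.2 and the references cited there) and then remarks that equilateral triangles and rhombi are convex polygons. You instead derive the tangent-space shiftability criterion from first principles and verify the rhombus angle bound by explicit spherical trigonometry using the diagonal constraint $|vx|>\psi$; this makes your proof self-contained where the paper's relies on background literature.

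One small correction: your ``if and only if'' should allow a gap of exactly $\pi$, since on the sphere a move perpendicular to a neighbor direction increases that distance at second order, so a gap of $\pi$ already permits a shift. This does not affect your conclusion, because in both the triangle and the quadrilateral case you establish a strict inequality (angle $<\pi$). Your concern about the reflex complement is legitimate and is simply not addressed in the paper, which tacitly takes ``rhombus'' to mean the convex region.
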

\begin{proof} Заметим, что  граф $\cg(X)$ на сфере является приводимым если у него найдется невыпуклая грань. Однако, у контактного графа все грани являются равносторонними, и в нашем случае это либо равносторонний треугольник, либо ромб. В обоих случаях это выпуклые многоугольники.  
\end{proof}

\begin{lemma} Пусть  $X\subset{\Bbb S}^2$ и $|X|=N,\, N>6$. Предположим, что $e(X)\geqslant 3N-8$. Тогда контактный граф $\cg(X)$ является неприводимым. 
\end{lemma}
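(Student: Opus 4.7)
The plan is to apply Euler's formula to severely constrain the face structure of $\cg(X)$ and then appeal to Lemma~5.2 for most configurations. First I would verify that $e(X) \geq 3N-8$ with $N>6$ forces $\cg(X)$ to have no isolated vertices and to be connected: removing $v_0\geq 1$ isolated vertices would contradict the planar bound $E \leq 3(N-v_0)-6$, and disconnectedness is ruled out by a similar counting argument. Euler then gives $F = E - N + 2$, and combining with the incidence identity $\sum_k k f_k = 2E$ yields
\[ \sum_{k\geq 4}(k-3)f_k \;=\; 2E-3F \;=\; 3N-6-E \;\leq\; 2. \]
Thus every face has at most $5$ sides; a pentagonal face, if present, is unique and all other faces are triangles, while if there is no pentagon there are at most two quadrilaterals (with the remaining faces triangles).

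In the first sub-case every face is a triangle or quadrilateral, and Lemma~5.2 yields irreducibility immediately. It remains to treat $E = 3N-8$ with exactly one pentagonal face $P$. Here I would show that $P$ must be convex; then no face has interior angle exceeding $\pi$ at any vertex, and the argument of Lemma~5.2 still applies (a contact graph is reducible exactly when some face has interior angle greater than $\pi$ at some vertex).

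For the convexity of $P$, let $\alpha = \alpha(\psi(X))$ denote the angle of an equilateral spherical triangle with side $\psi(X)$; since $N>6$ gives $\psi(X) \leq d_N < \pi/2$, one has $\pi/3 < \alpha < \pi/2$. At any non-pentagon vertex $v$, all incident faces are equilateral triangles, so the angle sum forces $\deg(v)\cdot\alpha = 2\pi$; the only integer solution in the available range of $\alpha$ is $\deg(v) = 5$, $\alpha = 2\pi/5$, which pins $\psi(X) = \arccos(1/\sqrt{5})$, the icosahedral edge length. A pentagon vertex of degree $d$ has interior pentagon angle $2\pi - (d-1)\alpha = 2\pi(6-d)/5$, convex exactly when $d \geq 4$. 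The hard part will be ruling out a pentagon vertex of degree~$3$: such a vertex would make $P$ reflex and $\cg(X)$ reducible, but the rigidity of icosahedral packings on ${\Bbb S}^2$ --- together with $d_N < \arccos(1/\sqrt{5})$ for $N \geq 13$ and the fact that for $N = 11,12$ the only contact graphs with $\psi(X) = \arccos(1/\sqrt{5})$ are the icosahedron (all triangles, not the present sub-case) and $I_{11}$ (whose pentagon vertices all have degree~$4$) --- rules this out. Hence every pentagon vertex has degree $\geq 4$, $P$ is convex, and $\cg(X)$ is irreducible.
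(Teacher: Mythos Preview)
Your outline tracks the paper's proof closely through the Euler-formula reduction and the invocation of Lemma~5.2 for the triangle/quadrilateral sub-cases; the deduction that $\alpha = 2\pi/5$ (equivalently $\psi(X) = \arccos(1/\sqrt5)$) from a non-pentagon vertex is also essentially the same observation the paper makes about its vertex~$F$.

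The difficulty is in your final step. To rule out a degree-$3$ pentagon vertex you appeal to (i) $d_N < \arccos(1/\sqrt5)$ for $N\ge13$, (ii) the classification of $11$- and $12$-point configurations with $\psi = \arccos(1/\sqrt5)$, and (iii) an unspecified ``rigidity of icosahedral packings''. Item~(ii) already imports the uniqueness of the Tammes optima for $N=11,12$; item~(iii) is never made precise; and, crucially, the cases $N=7,8,9,10$ are not covered at all --- for these $N$ one has $d_N > \arccos(1/\sqrt5)$, so configurations with $\psi(X)=\arccos(1/\sqrt5)$ certainly exist, and you give no argument that none of them has the one-pentagon face structure with a degree-$3$ pentagon vertex. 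As written, the argument is incomplete.

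The paper avoids this global detour entirely. It argues by contradiction: if the pentagon $ABCDE$ is non-convex at $A$, then (since the exterior angle at $A$ is at most $\pi$ and each triangle angle exceeds $\pi/3$) $A$ has exactly one further neighbour $F$; the triangular faces $ABF$, $AEF$ force $F$ to be adjacent to $B$ and $E$; analysing the degree of this single non-pentagon vertex $F$ gives $\deg F=5$ and $\alpha=72^\circ$; and then the equilateral pentagon $FBCDE$ has all interior angles $144^\circ$, which forces $A$ to be joined to $C$ and $D$ as well --- contradicting $\deg A = 3$. This is local, uniform in $N>6$, and uses no classification input. Your plan is completed most cleanly by replacing the last paragraph with this local argument rather than the case split on~$N$.
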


\begin{proof}  Из формулы Эйлера следует, что у графа с $N$ вершинами на сфере, все грани которого являются треугольниками, число ребер $e$ равно $3N-6$. По условию $e=3N-6, \,  3N-7$ или $3N-8$. В первом случае, у нас все грани треугольные, а во втором одна из граней четырехугольная. В обоих случаях Лемма 5.2 доказывает неприводимость контактного графа.
	
Остается третий случай, когда $e=3N-8$. Здесь возможно два подслучая: либо имеется ровно две четырехугольных грани, но тогда опять можно применить Лемму 5.2, либо одна пятиугольная грань $ABCDE$.

Предположим, что граф $\cg(X)$ приводим. Тогда сферический равносторонний пятиугольник  $ABCDE$ невыпуклый. Без ограничения общности, можно считать, что внутренний  угол вершины $A$ больше или равен  $180^\circ$, а все остальные углы меньше $180^\circ$. Тогда $A$ может быть соединено ребром только с одной вершиной $F$. Это следует из того факта, что все углы больше  $60^\circ$ и в случае если бы были еще вершины, кроме $F$, $B$ и $E$,  соединенные с $A$, то внешний угол $A$ был бы больше $180^\circ$. Поскольку все грани $\cg(X)$, кроме $ABCDE$, - треугольные, то все грани, которые сходятся в  вершине $F$ являются треугольниками и $F$  соединена ребрами с $B$ и $E$, см. рисунок.

Предположим, что степень вершины $F$ равна $m$. Тогда $m=3,4$ или $5$. Из этого следует, что углы треугольных граней соответственно равны $120^\circ$, $90^\circ$ или $72^\circ$. Первых двух случаев не может быть при $N>6$, а в третьем -- треугольные грани будут такими же как у правильного икосаэдра. 
Тогда, все внутренние углы пятиугольника $FBCDE$ равны $144^\circ$, а стало быть и вершина $A$ соединена с вершинами $C$ и $D$. Это противоречие завершает доказательство леммы.     
\end{proof}

\medskip 

\begin{center}
\begin{picture}(320,140)(-80,-70)
\put(-50,-65){ Рис. Случай, когда $ABCDE$ невыпуклый.}

\put(10,-20){\circle*{5}}

\put(90,-40){\circle*{5}}
\put(70,20){\circle*{5}}

\put(130,40){\circle*{5}}

\put(10,40){\circle*{5}}

\put(70,60){\circle*{5}}

\thicklines
\put(10,-20){\line(4,-1){80}}
\put(10,-20){\line(0,1){60}}

\put(70,20){\line(3,1){60}}

\put(70,20){\line(-3,1){60}}

\put(90,-40){\line(1,2){40}}

\put(70,60){\line(0,-1){40}}
\put(70,60){\line(3,-1){60}}
\put(70,60){\line(-3,-1){60}}

\put(-5,-21){$D$}
\put(97,-41){$C$}
\put(58,28){$A$}
\put(134,44){$B$}
\put(73,65){$F$}
\put(-3,45){$E$}
 
\end{picture}
\end{center}

\begin{proof} Докажем теперь теорему. При $N=8$ и $N=11$ у графов 7.3.4 и 7.6.22 число ребер равно  $3N-8$. Тогда Лемма 5.3 гарантирует, что контактный граф с максимальным числом ребер является неприводимым и поэтому $\ko_N=\ko^*_N$.  В этих случаях, множества с максимальным количеством контактов единственны с точностью до изометрии. 
	
Для оставшихся $N$ множества $I_N$ дают примеры контактных графов с $3N-9$  ребрами. (Более того, для $N=7$ и $N=9$ у максимальных графов 7.2.2 и 7.4.7 тоже $3N-9$ ребер.) Докажем, что это правильный ответ.  От противного, если предположить что у контактных графов бывает больше ребер, т. е. $e(X)\geqslant 3N-8$, то по Лемме 5.3 графы  $\cg(X)$ являются неприводимыми. Однако, в таблицах 7.2, 7.4 и 7.5 графов с таким числом ребер нет.  
\end{proof}

\section{Задачи о неприводимых контактных графах}

В работе Л. Данцера \cite{Dan} имеется множество открытых вопросов, связанных с неприводимыми и Д-неприводимыми контактными графами. Наши работы \cite{MT,MT2013,MT14} тоже поднимают  большое число вопросов.  Остановимся на некоторых из них. 

Зафиксируем абстрактный граф $G$. Предположим, что найдется такое множество $X\subset{\Bbb S}^2$, что его контактный граф  является неприводимым и изоморфен $G$.  
Возможны два варианта:\\
 (i)  множество $X$ единственное с точностью до изометрии;\\
 (ii) имеется степень свободы для $X$, т. е. на сфере существует $k$-параметрическое семейство множеств $X$ с $k\geqslant 1$. 

\medskip  

Доказано, что  для $6\leqslant N\leqslant14$ и $N=24$ к (i) типу относятся максимальные графы (т. е. контактные графы, задающие решение проблемы Таммеса). Возникает вопрос:

\medskip

\noindent{\bf 1.} {\it Верно ли, что все максимальные графы лежат в классе  $(i)$?}

\medskip

Этот вопрос можно усилить. Мы уже отмечали, что в торической проблеме Таммеса  при $N=7$ оказалось, что имеется три неизоморфных максимальных графа \cite{MN}.  

\medskip

\noindent
{\bf 2.} {\it Верно ли, что на сфере для заданного $N>5$ имеется единственный (с точностью до изоморфизма) максимальный граф?}

\medskip

Для $6\leqslant N\leqslant10$,  $N=13$ и $N=14$ максимальные графы получаются  из графов типа (ii) добавлением ребер. Вместе с тем, для $N=11,12$ максимальные графы изолированы и не получаются таким образом. В связи с этим появляется такой вопрос:

\medskip

\noindent
{\bf 3.} {\it Являются ли случаи $N=11,12$ исключением и все остальные максимальные графы получаются из класса   $(ii)$?}

\medskip

Заметим, что в классе $(i)$ лежат не только максимальные графы. Например, к нему относятся графы 7.5.11, 7.5.21, 7.5.28, 7.5.29, 7.6.1. 7.6.5, 7.6.6, 7.6.15 и еще несколько графов из таблицы 7.6. Однако, большинство графов обладают степенью свободы. Л. Данцер \cite[p. 21]{Dan} обсуждая графы типа (ii), отмечает, что во всех рассмотренных им примерах, меняя параметры так, чтобы увеличивалось $d=\psi(X)$ мы достигнем максимума, когда к графу добавятся новые ребра. Однако, далее он пишет, что не решается сформулировать это наблюдение в качестве гипотезы. А мы все-таки зададим вопрос: 

\medskip

\noindent
{\bf 4.} {\it Пусть граф $G$ относится к классу $(ii)$. Предположим, что $X\subset{\Bbb S}^2$ такое, что $\cg(X)=G$. Верно ли, что можно  немного так изменить $X$ на $X'$, что $\cg(X')=G$ и $\psi(X')>\psi(X)$?}

\medskip

По крайней мере, на один из вопросов Данцера \cite[Question 5, p. 65]{Dan} мы можем ответить. Он спрашивает:  {\it``Имеется ли на сфере такое множество $X$, что $|X|<12$, $\cg(X)$ является неприводимым контактным графом и $\psi(X)<d_{12}=\arccos{1/\sqrt{5}}$?''}

\medskip

Дело в том, что среди рассмотренных Данцером Д-неприводимых графов таких не оказалось. Однако, в нашем списке графы с $\psi(X)<d_{12}$ появляются уже при $N=9$.  Это граф 7.4.3. Таких графов много для $N=10$ и такие все, кроме максимального, для $N=11$. 

 Поскольку максимальный граф является неприводимым, то 
$$
d_N=\max\limits_{X\in J_N}{\psi(X)}. 
$$

Рассмотрим теперь и минимум. 
$$
\delta_N=\min\limits_{X\in J_N}{\psi(X)}. 
$$
Тогда вопрос Данцера можно переформулировать как:   ``Найти минимальное $N$ при котором $\delta_N<d_{12}$.''   (Ответ на этот вопрос, как мы отмечали выше, $N=9$.) По аналогии с проблемой Таммеса здесь возникает такая задача  

\medskip

\noindent
{\bf 5.} {\it Найти все $N$--точечные  конфигурации  $X$ на ${\mathbb S}^2$, чтобы  $\psi(X)=\delta_N$.}

\medskip

Таблицы 7.1--7.6 позволяют ответить на этот вопрос для $N<12$, но поскольку $d_{min}$ там найдены численно, необходим более детальный геометрический анализ минимальных конфигураций. 

\medskip

В предыдущем разделе мы рассматривали $K_N^*$ -- максимальное число ребер у неприводимого контактного графы, а здесь мы рассмотрим минимальное число ребер.  

\medskip

\noindent
{\bf 6.} {\it Найти нижние оценки на величину $\kappa_N$, где} 
$$
\kappa_N:=\min\limits_{X\in J_N}{e(X)}. 
$$

Из таблиц 7.1--7.6 вытекает следующая теорема:
\begin{thm}
	\begin{enumerate}
	\item $\kappa_6=9$;
	\item $\kappa_7=11;$
	\item  $\kappa_8=12;$
	\item  $\kappa_9=12;$
	\item   $\kappa_{10}=14;$
	\item   $\kappa_{11}=15.$ 
\end{enumerate}
\end{thm}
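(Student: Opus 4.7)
По определению $\kappa_N=\min_{X\in J_N}e(X)$, а согласно основному результату работы \cite{MT2013}, Таблицы 7.1--7.6 в Приложении перечисляют, с точностью до изометрии, \emph{все} неприводимые контактные графы с $N=6,7,8,9,10,11$ вершинами соответственно. Поэтому утверждение теоремы сводится к чисто табличной проверке: достаточно в каждой из шести таблиц найти минимум столбца числа рёбер.

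Первым шагом я бы выписал для каждого $N$ граф из соответствующей таблицы, на котором достигается заявленный минимум (для $N=6$ -- правильный октаэдр, для $N=9$ это указанный в тексте граф 7.4.3 и т.п.), проверив его число рёбер; тем самым устанавливается неравенство $\kappa_N\leqslant$ заявленного значения. Вторым шагом я бы получил нижнюю оценку комбинаторно. Пусть $X\in J_N$ содержит $k$ изолированных вершин. По Предложению~2.4 степени остальных вершин принадлежат $\{3,4,5\}$, и подсчёт концов рёбер даёт
$$
e(X)\geqslant\left\lceil\frac{3(N-k)}{2}\right\rceil.
$$
При $N=6,7,8$ в минимальной конфигурации $k=0$ (при $k\geqslant 1$ остаются $N-k\leqslant N-1$ вершин, но тогда из Предложения~2.5 и малого $N$ видно, что грань, содержащая изолированную вершину, не помещается), что даёт $e(X)\geqslant 9,11,12$ -- в точности объявленные числа. Для $N=10,11$ Предложение~2.6 заставляет $k\leqslant 1$, и подстановка $k=1$ даёт $e(X)\geqslant 14$ и $e(X)\geqslant 15$.

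Основным препятствием будет случай $N=9$: здесь Предложение~2.6 ещё не применимо (оно требует $N>10$), поэтому комбинаторно допустимо $k\geqslant 2$ изолированных вершин, и голый подсчёт рёбер мог бы дать меньшее число, чем~12. Нижнюю оценку $e(X)\geqslant 12$ придётся тогда извлекать из самой Таблицы~7.4: после фильтрации списка $L_9$ по геометрическим соотношениям, описанным в п.~2.4 и в \cite[Предложение 4.1]{MT2013}, ни один граф с $\leqslant 11$ рёбрами не реализуется на сфере как неприводимый контактный граф. Таким образом, для $N=9$ нижняя оценка опирается не на чистую комбинаторику, а на вычислительную полноту перечисления из \cite{MT2013}.

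Заключительный шаг состоит в объединении оценок: для каждого $N\in\{6,\ldots,11\}$ мы имеем $\kappa_N\leqslant$ значения, реализованного конкретным графом из таблицы, и $\kappa_N\geqslant$ того же значения из комбинаторно-геометрического разбора (для $N=9$ -- напрямую из полного списка Таблицы~7.4). Равенство даёт заявленные числа $9,11,12,12,14,15$.
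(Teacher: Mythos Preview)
The paper's own argument is a single sentence: ``Из таблиц 7.1--7.6 вытекает следующая теорема.'' In other words, both the upper and the lower bound are read directly off the complete enumeration; no separate combinatorial estimate is attempted. Your first paragraph already reproduces that argument and would suffice by itself.

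The additional combinatorial lower bounds you propose, however, have several genuine gaps. First, a factual slip: for $N=6$ the regular octahedron has $12$ edges, not $9$; the minimizer is the other graph 7.1.1 in the table (a prism-type graph with $9$ edges), so your explicit witness for the upper bound is wrong there. Second, Proposition~2.6 is stated only for $N>10$, so you cannot invoke it at $N=10$; your inequality $k\leqslant 1$ for ten points is unsupported. Third, even at $N=11$ Proposition~2.6 does not by itself force $k\leqslant 1$: it only says that each isolated vertex sits in a face with at least six vertices and that a single hexagon carries at most one isolated vertex. Nothing rules out two distinct hexagonal faces, or a heptagonal face, each housing an isolated vertex; you would need an additional counting argument on the remaining $11-k$ vertices and their face structure to exclude $k\geqslant 2$. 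Finally, for $N=6,7,8$ your exclusion of isolated vertices via Proposition~2.5 is not a proof: that proposition bounds the face size by $\lfloor 2\pi/\psi(X)\rfloor$, but without a lower bound on $\psi(X)$ this says nothing, and no such bound has been established for a hypothetical non-extremal $X\in J_N$.

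Since the tables are complete, the clean fix is simply to drop the combinatorial detour and read both inequalities off Tables~7.1--7.6 for every $N$, exactly as the paper does. If you want to keep a combinatorial component, you must supply the missing arguments above; at minimum, correct the $N=6$ witness and replace the appeals to Proposition~2.6 at $N=10$ (and the unjustified $k\leqslant 1$ at $N=11$) either by a genuine face-count argument or by the table.
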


\medskip

В заключение, остановимся еще на одной задаче из списка Данцера \cite[p. 64]{Dan}. Пусть точки $X=\{x_1,\ldots,x_N\}$ заданы сферическими координатами $(\theta_i, \varphi_i)$.  Без ограничения общности можно считать, что $\theta_1=\pi/2, \, \varphi_1=0$ и $\theta_2=\pi/2$. Тогда сферические координаты задают пространство конфигураций $\Pi_N$ размерности $2N-3$.  

Поскольку, для каждого $X$ на сфере у нас определена величина $\psi(X)$, то задана функция   $\psi:\Pi_N\to {\Bbb R}$. Возникает вопрос: 

\medskip

\noindent
{\bf 7.} {\it Найти условия при которых $X$  является  максимумом функции $\psi$ на $\Pi_N$.  }

\medskip

Нам кажется, что  эта задача довольно сложная.

\section{Приложение: Список всех неприводимых контактных графов для $N\leqslant 11$}
Приведем здесь основной результат нашей работы \cite{MT2013}.  
\begin{thm} Список всех неприводимых контактных графов для $N=6,7,8,9,10,11$
на сфере ${\Bbb S}^2$ приводится в таблицах 7.1--7.6.
\end{thm}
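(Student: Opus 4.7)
Стратегия заключается в реализации алгоритма перечисления, описанного в разделе 2.4, в сочетании с машинной классификацией из работы \cite{MT2013}. Основная идея: сначала получить конечный список всех комбинаторных кандидатов, а затем проверить каждый из них на геометрическую реализуемость как неприводимого контактного графа.

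Сначала я бы использовал Следствие 2.1, чтобы извлечь комбинаторные ограничения на $G:=\cg(X)$: $G$ планарен, степени его вершин принадлежат множеству $\{0,3,4,5\}$, и (при $N>10$) изолированные вершины лежат в гранях с шестью или более вершинами, причём в шестиугольной грани таких вершин не более одной. Применяя программу \emph{plantri} \cite{PLA1, PLA2} с указанными ограничениями на степени и длины граней (напомним, что по Предложению 2.5 длины граней ограничены сверху числом $\lfloor 2\pi/\psi(X)\rfloor$), получаем конечный список $L_N$ всех планарных графов с $N$ вершинами, удовлетворяющих этим условиям. Для $N\leqslant 11$ числа $|L_N|$ растут быстро (например, $|L_8|=257$), но остаются в пределах допустимой вычислительной обработки.

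Далее каждое вложение графа $G\in L_N$ в ${\Bbb S}^2$ однозначно задаётся длиной ребра $d$ и наборами углов $u_{ki}$ граней $F_k$. Геометрические соотношения между этими параметрами (сферические формулы для равносторонних многоугольников, равенство суммы углов при каждой вершине $2\pi$ и т. п.) собраны в \cite[Предложение 4.1]{MT2013}. Применяя линейную аппроксимацию этих соотношений, отсеиваем те графы из $L_N$, для которых соответствующая система несовместна. Для оставшихся графов я бы запускал нелинейный солвер с оценкой границ изменения параметров, и проверял существование реализации $X\subset {\Bbb S}^2$ с $\cg(X)=G$ и $\psi(X)=d$, являющейся неприводимой (то есть такой, что ни одну вершину $X$ нельзя сдвинуть в сторону в смысле определения из раздела 2.1).

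Главное препятствие — размер $L_N$ и необходимость выполнять много тонких геометрических проверок; при этом линейная аппроксимация может как пропускать заведомо невозможные конфигурации, так и в принципе ошибочно отсеивать некоторые реализуемые графы, поэтому для каждого графа-кандидата требуется аккуратное численно-геометрическое подтверждение. Поскольку при $6\leqslant N\leqslant 11$ количество оставшихся графов $|J_N|$ обозримо (от $I_6=2$ до $I_{11}=38$, см. сводную таблицу раздела 3), алгоритм завершается и выдаёт именно те списки, которые приведены в Таблицах 7.1--7.6.
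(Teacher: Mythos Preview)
Your proposal is correct and follows essentially the same approach as the paper: the theorem is stated here as the main result of \cite{MT2013}, and the proof there proceeds exactly along the two-stage algorithm you describe (combinatorial enumeration via \emph{plantri} under the constraints of Corollary~2.1, followed by geometric filtering using linear approximations of the relations in \cite[Предложение~4.1]{MT2013} and solver verification). There is nothing to add; the paper itself does not reprove the result but simply quotes it from \cite{MT2013}, and your outline matches Section~2.4 faithfully.
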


 В таблицах $*$
означает, что соответствующий неприводимый граф является также
и Д-неприводимым, а  $**$ означает, что этот граф - максимальный. 
Заметим, что Л. Данцер перечислил Д-неприводимые графы до $N=10$, и поэтому в таблице для $N=11$ у нас не отмечены Д-неприводимые графы.

В таблицах также показаны и предельные значения $d$, $d_{min}\leqslant
d \leqslant d_{max}$. (Однако, отметим, что величины $d_{min}$
и $d_{max}$ найдены численно и могут немного отличаться от истинных.)

\subsection{Неприводимые графы для 6 вершин.}
\begin{tabular}{ccc}
$N$ & $d_{min}$ & $d_{max}$ \\
$1*$ & $1.4274$ & $1.5708$ \\
$2**$ & $1.5708$ & $1.5708$ \\

\end{tabular}

\begin{tabular}{cccc}
  &
\includegraphics[clip,scale=0.4]{pics/irr-6.mps}  
& \; \; \; &
\includegraphics[clip,scale=0.2]{pics/irr-4.mps}  
\end{tabular}

\subsection{Неприводимые графы для 7 вершин.}
\begin{tabular}{ccc}
$N$ & $d_{min}$ & $d_{max}$ \\
$1*$ & $1.34978$ & $1.35908$ \\
$2**$ & $1.35908$ & $1.35908$ \\
\end{tabular}

\begin{tabular}{cc}
\includegraphics[clip,scale=0.5]{pics/seven1.mps} 
~~~~
\includegraphics[clip,scale=0.5]{pics/seven2.mps}
\end{tabular}

\subsection{Неприводимые графы для 8 вершин.}

\begin{tabular}{ccc}
$N$ & $d_{min}$ & $d_{max}$ \\
$1$ & $1.17711$ & $1.18349$ \\
$2*$ & $1.28619$ & $1.30653$ \\
$3*$ & $1.23096$ & $1.30653$ \\
$4**$ & $1.30653$ & $1.30653$ \\

\end{tabular}

\begin{tabular}{cc}
\includegraphics[clip,scale=0.5]{pics/eight1.mps} 
~~
\includegraphics[clip,scale=0.5]{pics/eight2.mps}
~~
\includegraphics[clip,scale=0.5]{pics/eight3.mps}
~~
\includegraphics[clip,scale=0.5]{pics/eight4.mps}
\end{tabular}

\subsection{Неприводимые графы для 9 вершин.}
\begin{tabular}{ccc}
$N$ & $d_{min}$ & $d_{max}$ \\
$1$ & $1.14099$ & $1.14143$ \\
$2*$ & $1.22308$ & $1.23096$ \\
$3$ & $1.10525$ & $1.14349$ \\
$4$ & $1.17906$ & $1.18106$ \\
$5$ & $1.15448$ & $1.17906$ \\
$6$ & $1.17906$ & $1.17906$ \\
$7**$ & $1.23096$ & $1.23096$ \\
$8$ & $1.15032$ & $1.18106$ \\
$9*$ & $1.10715$ & $1.14342$ \\
$10$ & $1.17906$ & $1.18428$ \\
\end{tabular}

\begin{tabular}{cc}
\includegraphics[clip,scale=0.5]{pics/nine1.mps} 
~~
\includegraphics[clip,scale=0.5]{pics/nine2.mps} 
~~
\includegraphics[clip,scale=0.5]{pics/nine3.mps} 
~~
\includegraphics[clip,scale=0.5]{pics/nine4.mps} 
~~
\includegraphics[clip,scale=0.5]{pics/nine5.mps} \\
\includegraphics[clip,scale=0.5]{pics/nine6.mps} 
~~
\includegraphics[clip,scale=0.5]{pics/nine7.mps} 
~~
\includegraphics[clip,scale=0.5]{pics/nine8.mps} 
~~
\includegraphics[clip,scale=0.5]{pics/nine9.mps} 
~~
\includegraphics[clip,scale=0.5]{pics/iv_nine1.mps} 
~~
\end{tabular}

\subsection{Неприводимые графы для 10 вершин.}

\begin{tabular}{ccccccc}
$N$ & $d_{min}$ & $d_{max}$ & & $N$ & $d_{min}$ & $d_{max}$ \\
$1$ & $1.0839$ & $1.09751$ & & $2$ & $1.08161$ & $1.08439$ \\
$3$ & $1.03067$ & $1.04695$ & & $4$ & $1.10715$ & $1.0988$  \\
$5$ & $1.07529$ & $1.09431$ & & $6$ & $1.09386$ & $1.12285$ \\
$7*$ & $1.15278$ & $1.15448$ & & $8$ & $1.10012$ & $1.10801$ \\
$9$ & $1.06344$ & $1.07834$ & & $10*$ & $1.15074$ & $1.15191$ \\
$11$ & $1.0843$ & $1.08442$ & & $12$ & $1.10055$ & $1.10889$ \\
$13$ & $1.09504$ & $1.10429$ & & $14$ & $1.06032$ & $1.09604$ \\
$15$ & $1.06278$ & $1.1098$ & & $16$ & $1.09567$ & $1.10715$ \\
$17**$ & $1.15448$ & $1.15448$ & & $18$ & $0.99865$ & $1.0467$ \\
$19$ & $1.0843$ & $1.0844$ & & $20$ & $1.08334$ & $1.09547$ \\
$21*$ & $1.15341$ & $1.15341$ & & $22$ & $1.0988$ & $1.10608$ \\
$23*$ & $1.14372$ & $1.15191$ & & $24$ & $1.09249$ & $1.1098$ \\
$25*$ & $1.15191$ & $1.15245$ & & $26$ & $1.09658$ & $1.10977$ \\
$27*$ & $1.15191$ & $1.15191$ & & $28*$ & $1.10715$ & $1.10715$ \\
$29*$ & $1.10715$ & $1.10715$ & & $30$ & $1.15103$ & $1.15341$  \\ 
\end{tabular}
\newpage
\begin{tabular}{c}
\includegraphics[clip,scale=0.5]{pics/ten1.mps} 
~~
\includegraphics[clip,scale=0.5]{pics/ten2.mps} 
~~
\includegraphics[clip,scale=0.5]{pics/ten3.mps} 
~~
\includegraphics[clip,scale=0.5]{pics/ten4.mps} 
~~
\includegraphics[clip,scale=0.5]{pics/ten5.mps} 

\includegraphics[clip,scale=0.5]{pics/ten6.mps} 
\end{tabular}
\begin{tabular}{c}
\includegraphics[clip,scale=0.5]{pics/ten7.mps} 
~~
\includegraphics[clip,scale=0.5]{pics/ten8.mps} 
~~
\includegraphics[clip,scale=0.5]{pics/ten9.mps} 
~~
\includegraphics[clip,scale=0.5]{pics/ten10.mps} 
~~
\includegraphics[clip,scale=0.5]{pics/ten11.mps} 
~~
\includegraphics[clip,scale=0.5]{pics/ten12.mps} \\
\end{tabular}
\begin{tabular}{c}
\includegraphics[clip,scale=0.5]{pics/ten13.mps} 
~~
\includegraphics[clip,scale=0.5]{pics/ten14.mps} 
~~
\includegraphics[clip,scale=0.5]{pics/ten15.mps} 
~~
\includegraphics[clip,scale=0.5]{pics/ten16.mps} 
~~
\includegraphics[clip,scale=0.5]{pics/ten17.mps} 
~~
\includegraphics[clip,scale=0.5]{pics/ten18.mps} \\
\end{tabular}
\begin{tabular}{c}
\includegraphics[clip,scale=0.5]{pics/ten19.mps} 
~~
\includegraphics[clip,scale=0.5]{pics/ten20.mps} 
~~
\includegraphics[clip,scale=0.5]{pics/ten21.mps}  
~~
\includegraphics[clip,scale=0.5]{pics/ten22.mps} 
~~
\includegraphics[clip,scale=0.5]{pics/ten23.mps} 
~~
\includegraphics[clip,scale=0.5]{pics/ten24.mps} \\
\end{tabular}
\begin{tabular}{c}
\includegraphics[clip,scale=0.5]{pics/ten25.mps} 
~~
\includegraphics[clip,scale=0.5]{pics/ten26.mps} 
~~
\includegraphics[clip,scale=0.5]{pics/ten27.mps} 
~~
\includegraphics[clip,scale=0.5]{pics/ten28.mps} 
~~
\includegraphics[clip,scale=0.5]{pics/ten29.mps} 
~~
\includegraphics[clip,scale=0.5]{pics/iv_ten1.mps} 
\end{tabular}

\medskip

\subsection{Неприводимые графы для 11 вершин.}
\begin{tabular}{ccccccc}
$N$ & $d_{min}$ & $d_{max}$ & & $N$ & $d_{min}$ & $d_{max}$ \\
$1$ & $1.05601$ & $1.05602$ & & $2$ & $1.0538$ & $1.05842$ \\
$3$ & $1.05834$ & $1.05842$ & & $4$ & $1.04765$ & $1.05455$  \\
$5$ & $1.06975$ & $1.06974$ & & $6$ & $1.06306$ & $1.06308$ \\
$7$ & $1.0522$ & $1.06131$ & & $8$ & $1.06621$ & $1.06846$ \\
$9$ & $1.0538$ & $1.05531$ & & $10$ & $1.0795$ & $1.07961$ \\
$11$ & $1.05331$ & $1.0737$ & & $12$ & $1.07163$ & $1.07197$ \\
$13$ & $1.0404$ & $1.06635$ & & $14$ & $1.04759$ & $1.05637$ \\
$15$ & $1.06974$ & $1.06974$ & & $16$ & $1.02726$ & $1.06117$ \\
$17$ & $1.04712$ & $1.06167$ & & $18$ & $1.06043$ & $1.06209$ \\
\end{tabular}
\begin{tabular}{ccccccc}
$N$ & $d_{min}$ & $d_{max}$ & & $N$ & $d_{min}$ & $d_{max}$ \\
$19$ & $1.05386$ & $1.05947$ & & $20$ & $1.05846$ & $1.05882$ \\
$21$ & $1.0632$ & $1.0636$ & & $22**$ & $1.10715$ & $1.10715$ \\
$23$ & $1.05388$ & $1.06537$ & & $24$ & $1.05375$ & $1.0737$ \\
$25$ & $1.06167$ & $1.0636$ & & $26$ & $1.06506$ & $1.06673$ \\
$27$ & $1.04636$ & $1.05882$ & &  $28$ & $1.05426$ & $1.06822$ \\
$29$ & $1.07832$ & $1.07836$ & & $30$ & $1.07886$ & $1.07962$ \\
$31$ & $1.05429$ & $1.06105$ & & $32$ & $1.00523$ & $1.05671$ \\
$33$ & $1.061$ & $1.06117$ & & $34$ & $1.02751$ & $1.05828$ \\
$35$ & $1.05447$ & $1.06679$ & & $36$ & $1.0561$ & $1.05627$ \\
$37$ & $1.05431$ & $1.05827$ & & $38 (iv)$ & $1.0064$ & $1.03613$ \\
\end{tabular}

\medskip

\begin{tabular}{c}
\includegraphics[clip,scale=0.5]{pics/eleven1.mps} 
~~
\includegraphics[clip,scale=0.5]{pics/eleven2.mps} 
~~
\includegraphics[clip,scale=0.5]{pics/eleven3.mps} 
~~
\includegraphics[clip,scale=0.5]{pics/eleven4.mps} 
~~
\includegraphics[clip,scale=0.5]{pics/eleven5.mps} 
~~
\includegraphics[clip,scale=0.5]{pics/eleven6.mps} \\
~~
\includegraphics[clip,scale=0.5]{pics/eleven7.mps} 
~~
\includegraphics[clip,scale=0.5]{pics/eleven8.mps} 
~~
\includegraphics[clip,scale=0.5]{pics/eleven9.mps} 
~~
\includegraphics[clip,scale=0.5]{pics/eleven10.mps} 
~~
\includegraphics[clip,scale=0.5]{pics/eleven11.mps} 
~~
\includegraphics[clip,scale=0.5]{pics/eleven12.mps} \\
~~
\includegraphics[clip,scale=0.5]{pics/eleven13.mps} 
~~
\includegraphics[clip,scale=0.5]{pics/eleven14.mps} 
~~
\includegraphics[clip,scale=0.5]{pics/eleven15.mps} 
~~
\includegraphics[clip,scale=0.5]{pics/eleven16.mps} 
~~
\includegraphics[clip,scale=0.5]{pics/eleven17.mps} 
~~
\includegraphics[clip,scale=0.5]{pics/eleven18.mps} \\
\end{tabular}
\begin{tabular}{c}
\includegraphics[clip,scale=0.5]{pics/eleven19.mps} 
~~
\includegraphics[clip,scale=0.5]{pics/eleven20.mps} 
~~
\includegraphics[clip,scale=0.5]{pics/eleven21.mps} 
~~
\includegraphics[clip,scale=0.5]{pics/eleven22.mps} 
~~
\includegraphics[clip,scale=0.5]{pics/eleven23.mps} 
~~
\includegraphics[clip,scale=0.5]{pics/eleven24.mps} \\
~~
\includegraphics[clip,scale=0.5]{pics/eleven25.mps} 
~~
\includegraphics[clip,scale=0.5]{pics/eleven26.mps} 
~~
\includegraphics[clip,scale=0.5]{pics/eleven27.mps} 
~~
\includegraphics[clip,scale=0.5]{pics/eleven28.mps} 
~~
\includegraphics[clip,scale=0.5]{pics/eleven29.mps} 
~~
\includegraphics[clip,scale=0.5]{pics/eleven30.mps} \\
~~
\includegraphics[clip,scale=0.5]{pics/eleven31.mps} 
~~
\includegraphics[clip,scale=0.5]{pics/eleven32.mps} 
~~
\includegraphics[clip,scale=0.5]{pics/eleven33.mps} 
~~
\includegraphics[clip,scale=0.5]{pics/eleven34.mps} 
~~
\includegraphics[clip,scale=0.5]{pics/eleven35.mps} 
~~
\includegraphics[clip,scale=0.5]{pics/eleven36.mps} \\
~~
\includegraphics[clip,scale=0.5]{pics/eleven37.mps} 
~~
\includegraphics[clip,scale=0.5]{pics/iv_eleven1.mps} 
~~
\end{tabular}

\medskip

\medskip

\medskip

\medskip

\medskip

\medskip

О. Р. Мусин, ИППИ РАН и UTB  (University of Texas at
Brownsville).

 {\it E-mail:} oleg.musin@utb.edu

\medskip

А. С. Тарасов, ИППИ РАН

{\it E-mail:} tarasov.alexey@gmail.com

\end{document}